\documentclass[12pt]{amsart}
\usepackage[utf8]{inputenc}
\usepackage{amsfonts}
\usepackage{amsmath}
\usepackage{amssymb}
\usepackage{amsthm}
\usepackage{xcolor}
\usepackage[margin=1.3in]{geometry}
\usepackage{graphicx}
\usepackage[11pt]{moresize}

\newtheorem{theorem}{Theorem}
\newtheorem*{conjecture*}{Conjecture}
\newtheorem{proposition}{Proposition}
\newtheorem{lemma}{Lemma}

\theoremstyle{remark}

\newtheorem*{remark*}{Remark}

\newcommand{\vs}{\vspace{3mm}}
\newcommand{\vsss}{\vspace{6mm}}
\newcommand{\un}{\underline}

\newcommand{\Z}{\mathbb{Z}}

\newcommand{\Prob}{\mathbb{P}}

\newcommand{\ep}{\epsilon}
\newcommand{\lam}{\lambda}

\newcommand{\wt}{\widetilde}

\raggedbottom

\title{New Lower Bounds for Trace Reconstruction}
\author{Zachary Chase}
\address{Mathematical Institute, Andrew Wiles Building, Radcliffe Observatory Quarter, Woodstock Road, Oxford OX2 6GG, UK}
\email{zachary.chase@maths.ox.ac.uk}
\date{July 23, 2020}

\pagestyle{plain}

\begin{document}

\begin{abstract}
We improve the lower bound on worst case trace reconstruction from $\Omega\left(\frac{n^{5/4}}{\sqrt{\log n}}\right)$ to $\Omega\left(\frac{n^{3/2}}{\log^{7} n}\right)$. As a consequence, we improve the lower bound on average case trace reconstruction from $\Omega\left(\frac{\log^{9/4}n}{\sqrt{\log\log n}}\right)$ to $\Omega\left(\frac{\log^{5/2}n}{(\log\log n)^{7}}\right)$.
\end{abstract}

\maketitle

\section{Introduction}

Given a string $x \in \{0,1\}^n$, a \textit{trace} of $x$ is obtained by deleting each bit of $x$ with probability $q$, independently, and concatenating the remaining string. For example, a trace of $11001$ could be $101$, obtained by deleting bits $2$ and $3$. The goal of the trace reconstruction problem is to determine an unknown string $x$, with high probability, by looking at as few independently generated traces of $x$ as possible. 

\vs

More precisely, fix $\delta,q \in (0,1)$. Take $n$ large. For each $x \in \{0,1\}^n$, let $\mu_x$ be the probability distribution on $\{0,1\}^{\le n}$ given by $\mu_x(w) = (1-q)^{|w|}q^{n-|w|}f(w;x)$, where $f(w;x)$ is the number of times $w$ appears as a subsequence in $x$, that is, the number of strictly increasing tuples $(i_1,\dots,i_{|w|})$ such that $x_{i_j} = w_j$ for $1 \le j \le |w|$. The problem is to determine the minimum value of $T = T(n)$ for which there exists a function $f: (\{0,1\}^{\le n})^T \to \{0,1\}^n$ satisfying $\Prob_{\mu_x^T}[f(\wt{U}^1,\dots,\wt{U}^T) = x] \ge 1-\delta$ for each $x \in \{0,1\}^n$ (where the $\wt{U}^j$ denote the $T$ independently generated traces).

\vs

The problem of trace reconstruction was introduced by Batu, Kannan, Khanna, and McGregor [1] as ``an abstraction and simplification of a fundamental problem in bioinformatics, where one desires to reconstruct a common ancestor of several organisms given genetic sequences from those organisms." [2]

\vs

Holenstein, Mitzenmacher, Panigrahy, and Wieder [3] established an upper bound, that $\exp(\wt{O}(n^{1/2}))$ traces suffice. Nazarov and Peres [4] and De, O'Donnell, and Servedio [5] simultaneously obtained the best upper bound known, that $\exp(O(n^{1/3}))$ traces suffice. The lower bound of $\Omega(n)$ was established in [1], by considering the strings $0^{\frac{n}{2}-1}10^{\frac{n}{2}}$ and $0^{\frac{n}{2}}10^{\frac{n}{2}-1}$. Holden and Lyons [2] obtained the (previous) best lower bound known, by presenting two strings $x'_n \not = y'_n \in \{0,1\}^n$ which require $\Omega(n^{5/4}/\sqrt{\log n})$ traces to distinguish between. Their idea was to keep a 1 as a ``defect" in the middle of the string, but to ``pad" with $01$'s instead of $0$'s.

In this paper, we improve the lower bound, exhibiting two strings $x_n \not = y_n \in \{0,1\}^n$ which require $\Omega(n^{3/2}/\log^{7} n)$ traces to distinguish between. In fact, our methods show that $\Omega(n^{3/2}/\log^{7} n)$ traces are required to distinguish between $x'_n$ and $y'_n$ as well (a (messier) analogue of \eqref{eqn:explicit} holds). We also use the idea of padding a ``defect" 1 with $01$'s. We chose strings slightly different than those considered in [2] for computational ease.

\vs

\noindent Let $k \ge 1$, $n = 4k+3$, and $x_n = (01)^k 1 (01)^{k+1}, y_n = (01)^{k+1}1(01)^k$, i.e.  

\hspace{30mm} $x_n = 0101...0101 \hspace{4mm} 1 \hspace{4mm} 01 \hspace{3.3mm} 0101...0101$

\hspace{30mm} $y_n = \hspace{.33mm} 0101...0101 \hspace{3mm} 01 \hspace{4mm} 1 \hspace{4.35mm} 0101...0101.$

\vspace{1mm}

\begin{theorem}
Fix $q, \delta \in (0,1)$. Then there exists some constant $c = c(q,\delta) > 0$ so that at least $cn^{3/2}/\log^{7}n$ traces are required to distinguish between $x_n$ and $y_n$ with probability at least $1-\delta$, under trace reconstruction with deletion probability $q$.
\end{theorem}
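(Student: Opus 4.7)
My plan is to show $\text{TVD}(\mu_{x_n},\mu_{y_n}) = O_q(\log^7 n / n^{3/2})$ and then invoke the standard reduction that distinguishing two distributions with error at most $\delta$ requires $\Omega_\delta(1/\text{TVD})$ samples.

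The strings have the rigid structure $x_n = L\cdot 10\cdot R$ and $y_n = L\cdot 01\cdot R$ with $L = (01)^k$ and $R = 1(01)^k$, so they differ only by a single adjacent transposition at positions $2k+1,2k+2$. My first step would be to derive an exact formula for the difference in subsequence counts by classifying embeddings of $w$ into $x_n$ (resp.\ $y_n$) according to how they meet those two positions: embeddings using neither position contribute equally to both sides, embeddings using exactly one biject across the two strings by swapping $2k+1 \leftrightarrow 2k+2$, and only embeddings using both positions contribute to the difference, forcing the corresponding two symbols of $w$ to be ``$10$'' in $x_n$ and ``$01$'' in $y_n$. This yields
\begin{equation*}
f(w;x_n) - f(w;y_n) \;=\; \sum_{j\,:\,w_j w_{j+1} = 10} f(w_{<j}; L)\, f(w_{>j+1}; R) \;-\; \sum_{j\,:\,w_j w_{j+1} = 01} f(w_{<j}; L)\, f(w_{>j+1}; R),
\end{equation*}
where $w_{<j}=w_1\cdots w_{j-1}$ and $w_{>j+1}=w_{j+2}\cdots w_{|w|}$.

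Plugging this into $\text{TVD}(\mu_{x_n},\mu_{y_n}) = \tfrac12\sum_w q^{n-|w|}(1-q)^{|w|}|f(w;x_n)-f(w;y_n)|$ reduces the theorem to an $L^1$-type estimate on the displayed signed sum. The naive triangle inequality loses everything, so the entire argument must rest on cancellation between the ``$+$'' and ``$-$'' contributions at adjacent transitions of $w$. A first-moment version of this cancellation is already visible in the factored mean-polynomial difference $\Delta(z) := \sum_i(x_i-y_i)z^{i-1} = z^{2k}(1-z)$: on the natural deletion disk $\{q+(1-q)e^{i\theta}\}$, balancing the $|z|^{2k}$ decay against the vanishing factor $(1-z)$ shows that $|\Delta|$ peaks at order $n^{-1/2}$ on an angular window of width only $\Theta(n^{-1/2})$ around $\theta = 0$.

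My plan for upgrading this to a full TVD estimate of order $n^{-3/2}$ is to represent each absolute value $|f(w;x_n)-f(w;y_n)|$ by an integral (for example via the Fourier identity $|a|=\tfrac{2}{\pi}\int_0^\infty \sin^2(at/2)/t^2\,dt$), interchange the order of summation, and exploit the fact that the inner sum over $w$ then factors into trace generating functions for $L$ and $R$ of $(q+(1-q)z)^m$-type coming from the alternating structure. The whole estimate then reduces to a single contour integral involving $|\Delta|$ against a rapidly decaying smooth weight, which I would bound by dyadically decomposing the angular variable on scales $2^{-j}$ for $1 \le j \le O(\log n)$ and using the explicit form of $\Delta$ on each scale. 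The $\log^7 n$ factor in the final bound I expect to arise from polylog overhead in the multi-scale bookkeeping together with standard binomial tail estimates used to truncate to typical trace lengths; the main obstacle is precisely this last analytic step, since matching the sup-norm bound $n^{-1/2}$ against one further factor of $n$ from integration requires delicate control of how the trace distributions of $L$ and $R$ interact with the alternating signs in the signed sum.
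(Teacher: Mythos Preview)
Your transposition decomposition and the resulting identity for $f(w;x_n)-f(w;y_n)$ are correct and clean. The fatal gap is the choice of metric. You aim to prove $\text{TVD}(\mu,\nu)=O(\log^7 n/n^{3/2})$, but this is stronger than what the paper proves and is very likely false: Holden and Lyons obtained only $\text{TVD}=\widetilde O(n^{-5/4})$ via coupling, and the paper's stated reason for the improvement from $n^{5/4}$ to $n^{3/2}$ is precisely \emph{abandoning} TVD in favour of the squared Hellinger distance. The paper bounds the $\chi^2$-type quantity
\[
\sum_{w\in E}\frac{(\mu(w)-\nu(w))^2}{\nu(w)}\ \lesssim\ \frac{\log^7 n}{n^{3/2}},
\]
which dominates $H(\mu,\nu)^2$ and gives $\Omega(1/H^2)=\Omega(n^{3/2}/\log^7 n)$ samples. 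Since $H^2\le \text{TVD}\le \sqrt{2}\,H$, the paper's bound is compatible with TVD as large as $n^{-3/4}$, so your target $\text{TVD}=O(n^{-3/2})$ has no reason to hold, and the reduction ``samples $\gtrsim 1/\text{TVD}$'' would only recover the weaker $n^{5/4}$-type bound at best.

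The analytic plan after your difference formula is also too schematic to carry the weight. The polynomial $\Delta(z)=z^{2k}(1-z)$ governs only the single-bit marginals $\Prob[\wt U_i=1]$; it says nothing about the joint law of a trace, and there is no mechanism by which the integral representation $|a|=\tfrac{2}{\pi}\int \sin^2(at/2)/t^2\,dt$ makes the inner sum over $w$ factor in the way you need. By contrast, the paper proceeds combinatorially: it uses the identity $f(w;(01)^k)=\binom{k+f_{\text c}(w)}{|w|}$ to write $\mu(w),\nu(w)$ explicitly, expands $(\mu-\nu)^2$ as a double sum over indices $j,t$, and then extracts cancellation not at the level of a single $w$ but across the sum over $w$ via a sequence of local-CLT estimates on binomial coefficients and two exact symmetry identities in the reparametrised variables $\delta_j,\epsilon_j$. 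That cancellation argument is where the extra $n^{1/4}$ comes from, and it has no analogue in the first-moment contour picture you sketch.
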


\vspace{1mm}

The main reason we are able to obtain an improvement over $n^{5/4}$ is that we explicitly compute (an upper bound for) the quantity relevant to determining the number of samples needed, rather than relying on a coupling argument to determine only the total variation distance of the measures induced on subsequences.

\vs

A variant of the trace reconstruction problem is, instead of being required to reconstruct any string $x$ from traces of it, one must reconstruct a string $x$ chosen uniformly at random from traces of it. For a formal statement of the problem, see Section 1.2 of [2]. The best upper bound known, due to Holden, Pemantle, and Peres, is that $\exp(O(\log^{1/3}n))$ traces suffice [6]. The (previous) best lower bound known was $\Omega(\frac{\log^{9/4}n}{\sqrt{\log\log n}})$ [2]. Proposition 4.1 of [2] together with Theorem 1 implies

\vs

\begin{theorem}
For all $q \in (0,1)$, there is $c = c(q) > 0$ so that for all large $n$, the probability of reconstructing a random $n$-bit string from $c\log^{5/2}(n)/(\log\log n)^{7}$ traces is at most $\exp(-n^{0.15})$, under trace reconstruction with deletion probability $q$.
\end{theorem}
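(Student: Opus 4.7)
The plan is to invoke Proposition 4.1 of Holden--Lyons [2], which is precisely a black-box reduction from a worst-case lower bound on an explicit pair $(x_n, y_n)$ of the right form to an average-case lower bound on uniformly random $n$-bit strings. Since our hard pair $x_n = (01)^k 1 (01)^{k+1}$ and $y_n = (01)^{k+1} 1 (01)^k$ has the same structural shape as the Holden--Lyons pair---a short ``defect'' flanked by long common prefix and suffix of the form $(01)^k$---it should satisfy the hypotheses of their proposition verbatim, and the proof reduces to plugging our improved worst-case bound into their reduction.

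To see how the numerical improvement propagates, recall that Proposition 4.1 produces, with probability at least $1 - \exp(-n^{\Omega(1)})$ over a uniformly random $X \in \{0,1\}^n$, roughly $\Theta(\log n)$ disjoint windows in $X$, each of which can be locally modified (by shifting a single interior $1$ by two positions) to produce an alternative candidate string whose trace distribution differs from that of $X$ in exactly the worst-case way. A correct reconstruction algorithm must resolve every window, and Proposition 4.1 amplifies this to a lower bound of order $\log n \cdot T(m)$ traces, where $T(m)$ is the worst-case number of traces required for length-$m$ inputs and $m = C \log n$ for a suitable constant $C = C(q)$. Substituting $T(m) = \Omega(m^{3/2}/\log^7 m)$ from Theorem 1 yields
\[
\Omega\!\left(\log n \cdot \frac{(\log n)^{3/2}}{(\log \log n)^7}\right) = \Omega\!\left(\frac{\log^{5/2} n}{(\log \log n)^7}\right),
\]
with the failure probability bound $\exp(-n^{0.15})$ inherited directly from Proposition 4.1.

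The only real piece of work is a routine check that our pair $(x_n, y_n)$ satisfies the precise technical hypotheses imposed by Proposition 4.1 (e.g., the exact structure of the common prefix/suffix relative to the defect). Since both strings in [2] already have the common-$(01)$-padding-plus-defect form used here, the verification should be immediate, and the theorem will follow with no additional probabilistic or combinatorial argument beyond the one in [2]. The main---though minor---obstacle is simply the notational bookkeeping needed to align our length-$n=4k+3$ strings with the parametrization of [2] and to confirm that the constant $C$ can be taken independent of $n$ (but dependent on $q$), which is implicit in the statement of their reduction.
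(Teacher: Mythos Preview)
Your proposal is correct and matches the paper's approach exactly: the paper's entire ``proof'' of Theorem 2 is the single sentence ``Proposition 4.1 of [2] together with Theorem 1 implies [Theorem 2],'' and you propose precisely this black-box combination. The only nuance is that the paper also remarks (in the introduction) that the $\Omega(n^{3/2}/\log^7 n)$ bound holds for the original Holden--Lyons pair $x'_n,y'_n$ as well, so one may apply Proposition 4.1 either to that pair directly or---as you suggest---verify that the new pair fits its hypotheses; either route works and neither requires new ideas.
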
 

\vs

Very recently, other variants of the trace reconstruction problem have been considered. The interested reader should refer to [7], [8], [9], and [10]. 

\vs

Here is an outline of the paper. In Section $2$, we recall ``the distance" (namely, the Hellinger distance) between two probability measures that is directly relevant for determining the number of samples needed to distinguish between them, and we deduce Theorem 1 assuming an appropriate estimate. In Section $3$, we prove the estimate by obtaining closed form expressions for the probability distributions induced by the traces of $x_n$ and $y_n$ and related expressions. In Section $4$, we give the proofs of some lemmas used throughout Section $3$. Finally, in Section $5$ we establish a result of independent interest, a nontrivial bound on the number of traces that suffice to distinguish between any pair of strings with a very large Hamming distance (in contrast to the small Hamming distance pair considered to get Theorem 1). 

\vspace{1.5mm}

\section{A Warmup to the Proof of Theorem 1}

Throughout the proof, $A \lesssim B$ means $A \le CB$ for some absolute constant $C$, and $A \asymp B$ means $A \lesssim B$ and $B \lesssim A$. We take $q = 1/2$ for ease; the (analogous) proof works for any $q \in (0,1)$. The variables (to be introduced later) $j,t,a,b,f,m$ will always be integers, the variables $\ep_j,\ep_t$ will always be integer multiples of $\frac{1}{3}$, and all expressions occurring in binomial coefficients will be integers (we clearly state when it appears otherwise due to slight abuse of notation). For a string $w$, we let $|w|$ denote the length of $w$, and for any positive integers $a,b$ with $a \le b$, we denote by $w_{a,b}$ the contiguous substring $w_a,w_{a+1},\dots,w_b$. 

\vs

Fix $n \equiv 3 \pmod{4}$ large. Let $k = \frac{n-3}{4}$. Let $\mu$ be the probability measure for the traces of $x_n$ and $\nu$ be the probability measure for the traces of $y_n$. Let $E$ be a subset of $\cup_{0 \le k \le n} \{0,1\}^k$ with $\mu(E),\nu(E) \ge 1-O(e^{-\frac{1}{2}\log^2 n})$. We define $E$ in Section 3.2. 

\vs

It is well known, though seemingly folklore, that the number of samples needed to distinguish between two probability distributions with high probability is proportional to the inverse square of the Hellinger distance between them (see, e.g., Lemma A.5 of [2]): $$\frac{1}{H(\mu,\nu)^2} = \frac{1}{\sum_w (\sqrt{\mu(w)}-\sqrt{\nu(w)})^2}.$$ Note $$\sum_w (\sqrt{\mu(w)}-\sqrt{\nu(w)})^2 \le \sum_{w \in E} (\sqrt{\mu(w)}-\sqrt{\nu(w)})^2+\sum_{w \not \in E} (\mu(w)+\nu(w)),$$ so since $$\mu(E^c),\nu(E^c) \le O(e^{-\frac{1}{2}\log^2 n}),$$ to show that $\Omega(\frac{n^{3/2}}{\log^7 n})$ traces are necessary to distinguish between $x_n$ and $y_n$, it suffices to show that $$\sum_{w \in E} (\sqrt{\mu(w)}-\sqrt{\nu(w)})^2 \lesssim \frac{\log^7 n}{n^{3/2}}.$$ And since $$\sum_{w \in E} (\sqrt{\mu(w)}-\sqrt{\nu(w)})^2 \le \sum_{w \in E} \left[\sqrt{\frac{\mu(w)}{\nu(w)}}+1\right]^2(\sqrt{\mu(w)}-\sqrt{\nu(w)})^2 = \sum_{w \in E} \frac{(\mu(w)-\nu(w))^2}{\nu(w)},$$ to prove Theorem 1, it suffices to show 

\begin{equation}\label{eqn:main}
\sum_{w \in E} \frac{(\mu(w)-\nu(w))^2}{\nu(w)} \lesssim \frac{\log^7 n}{n^{3/2}}.
\end{equation}

\vs

\section{Proving Inequality \eqref{eqn:main}}

\subsection{Obtaining Closed Form Expressions for $\mu$ and $\nu$\nopunct} \text{}

\vs

\noindent In this subsection, we obtain closed form expressions for the probability distributions of the traces of $x_n$ and $y_n$. Let $s_k = (01)^k =  0101\dots 01$ be of length $2k$. Let $f_{\text{c}}(w)$ denote the number of \un{c}ontiguous $01$ appearances in $w$.

\vs

We will use the following simple and fortuitous combinatorial lemma. It is the main reason we are able to obtain a simple(r) closed form expression.

\vs

\begin{lemma} For strings $w,z$, let $f(w;z)$ denote the number of times $w$ appears as a subsequence in $z$, that is, the number of strictly increasing tuples $(i_1,\dots,i_{|w|})$ such that $z_{i_j} = w_j$ for $1 \le j \le |w|$. Then, for any $k \ge 0$, $f(w;s_k) = {k+f_{\text{c}}(w) \choose m}$ if $|w| = m$.
\end{lemma}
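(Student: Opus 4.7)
The plan is to reduce $f(w;s_k)$ to a count of weakly increasing sequences in $\{1,\dots,k+f_{\text{c}}(w)\}$ of length $m$, which equals $\binom{k+f_{\text{c}}(w)}{m}$ by stars and bars. The idea is to parametrize each subsequence occurrence by which $01$-block its characters come from, then translate the strict ordering on positions into inequalities between block indices and invoke the usual order-preserving shift.

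In detail, I would first use that $s_k$ consists of $k$ consecutive blocks $01$: position $2\ell-1$ holds a $0$ and position $2\ell$ holds a $1$ for $\ell=1,\dots,k$. Given an occurrence $1 \le i_1 < \cdots < i_m \le 2k$ realizing $w$, define $p_j := \lceil i_j/2 \rceil \in \{1,\dots,k\}$. Since $w_j$ fixes the parity of $i_j$, the tuple $(p_1,\dots,p_m)$ together with $w$ recovers $(i_1,\dots,i_m)$. Next, I would do a four-case check on $(w_j,w_{j+1})$ to translate $i_j < i_{j+1}$: one finds that the comparison collapses to $p_j < p_{j+1}$ in three of the four patterns, and to $p_j \le p_{j+1}$ precisely when $(w_j,w_{j+1})=(0,1)$. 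Thus among the $m-1$ comparisons exactly $c := f_{\text{c}}(w)$ are weak and $m-1-c$ are strict.

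Finally, I would apply the standard trick: let $d_j$ be the number of strict comparisons among the first $j-1$, and set $q_j := p_j - d_j$. This turns every comparison into $q_j \le q_{j+1}$, while the range $1 \le p_j \le k$ becomes $1 \le q_j \le k-(m-1-c)=k+c-m+1$ (the lower bound is automatic from $q_1 \ge 1$ since $d_1 = 0$, and the upper bound is automatic from $q_m \le k - d_m$ together with $d_j \le d_m$). The number of weakly increasing length-$m$ sequences in $\{1,\dots,k+c-m+1\}$ is $\binom{k+c}{m}$, as desired. The edge cases $m=0$ and $k=0$ (where $w$ must be empty, $0^m$, or $1^m$) are immediate by direct inspection, and handle the otherwise vacuous comparisons.

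The step most likely to trip one up is the case analysis translating $i_j < i_{j+1}$ into an inequality on $(p_j,p_{j+1})$: the asymmetric fact that only the pattern $01$ allows two consecutive characters of $w$ to come from the same block is exactly what makes $f_{\text{c}}(w)$ (rather than some other statistic of $w$) the right quantity appearing in the answer. Once this is in hand, the remainder is a routine application of the shift-to-weakly-increasing bijection.
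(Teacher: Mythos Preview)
Your proof is correct and follows essentially the same bijection as the paper's: both map an occurrence to its sequence of block indices $p_j=\lceil i_j/2\rceil$ and then shift to remove the mixed strict/weak constraints. The only cosmetic difference is that the paper shifts by adding the number of preceding $01$-patterns to land directly on strictly increasing $m$-tuples in $\{1,\dots,k+f_{\text{c}}(w)\}$, whereas you subtract the number of preceding strict comparisons to land on weakly increasing $m$-tuples in $\{1,\dots,k+f_{\text{c}}(w)-m+1\}$ and then invoke stars and bars; these two targets are in standard bijection, so the arguments are equivalent.
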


\begin{proof}
The idea is that every $01$ occurring in $w$ is a chance to put two consecutive indices in $w$ in the same pair in $s_k$. Take any $1 \le j_1 < j_2 < \dots < j_m \le k+f_{\text{c}}(w)$. Let $I_1 = j_1$ and $I_{p+1} = I_p+j_{p+1}-j_p-1_{w_p = 0 = w_{p+1}-1}$ for $1 \le p \le m-1$. For each $1 \le p \le m$, let $i_p \in \{2I_p-1,2I_p\}$ be such that $w_p = (s_k)_{i_p}$. We thus get an occurrence of $w$ in $s_k$; conversely, given any occurrence of $w$ in $s_k$ via $(i_p)_{1 \le p \le m}$, we optain $(I_p)_{1 \le p \le m}$ and then $(j_p)_{1 \le p \le m}$ as above. The correspondence between $(j_p)_p$ and $(i_p)_p$ is a bijective one.
\end{proof}

\vs

\noindent Doing casework on whether $w$ includes the ``lone $1$" (i.e. the 1 at index $2k+1$ in $x$, and the 1 at index $2k+3$ in $y$, where the convention is that the first index is $1$), and if so, where it appears, Lemma 1 implies that \begin{equation}\label{eqn:explicitmu} 2^n\mu(w) = {2k+f_{\text{c}}(w) \choose |w|}+\sum_{\substack{1 \le j \le |w| \\ w_j = 1}} {k+f_{\text{c}}(w_{1,j-1}) \choose j-1}{k+1+f_{\text{c}}(w_{j+1,m}) \choose m-j}\end{equation} \begin{equation}\label{eqn:explicitnu} 2^n\nu(w) \hspace{1.3mm} = {2k+f_{\text{c}}(w) \choose |w|} + \sum_{\substack{1 \le j \le |w| \\ w_j = 1}} {k+1+f_{\text{c}}(w_{1,j-1}) \choose j-1}{k+f_{\text{c}}(w_{j+1,m}) \choose m-j}.\end{equation}

\vs

\subsection{The ``High Probability" Set $E$\nopunct} \text{}

\vs

\noindent We now define the ``high probability" set used in Section 2. Let $$E = \{w \in \{0,1\}^{\le n} : \big||w| - 2k\big| \le \sqrt{k}\log(k) \text{ and }\big|f_{\text{c}}(w)- \frac{2k}{3}\big| \le \sqrt{k}\log(k)\}.$$ 

\noindent In this subsection, we show $\mu(E), \nu(E) \ge 1-O(e^{-\frac{1}{2}\log^2 n})$. To this end, and for the purposes of proving inequality \eqref{eqn:main}, we make frequent use of the following technical lemma, used to estimate binomial coefficients. It is proven in Section 4.

\begin{lemma}
For any real $\eta$ bounded away from $0$ and $1$, any positive integers $A$ and $B$ such that $\eta A, \eta B \in \Z$, and any integers $\Delta$ and $\sigma$ such that $A+\Delta,\eta A+\sigma, B-\Delta$, and $\eta B-\sigma$ are non-negative, it holds that {\tiny $$\left[\frac{{A+\Delta \choose \eta A+\sigma}{B-\Delta \choose \eta B-\sigma}}{{A \choose \eta A}{B \choose \eta B}}\right]^{-1} =$$ $$(1+O(\frac{\sigma^3}{A^2}))(1+O(\frac{\Delta^3}{A^2}))(1+O(\frac{1}{A}))(1+O(\frac{\sigma(\Delta-\sigma)^2}{A^2}))(1+O(\frac{\Delta(\Delta-\sigma)^2}{A^2}))\exp\left(\frac{1}{2}\frac{(\Delta-\sigma)^2}{(1-\eta)A}+\frac{1}{2}\frac{\sigma^2}{\eta A}-\frac{1}{2}\frac{\Delta^2}{A}\right)$$ $$\times (1+O(\frac{\sigma^3}{B^2}))(1+O(\frac{\Delta^3}{B^2}))(1+O(\frac{1}{B}))(1+O(\frac{\sigma(\Delta-\sigma)^2}{B^2}))(1+O(\frac{\Delta(\Delta-\sigma)^2}{B^2}))\exp\left(\frac{1}{2}\frac{(\Delta-\sigma)^2}{(1-\eta)B}+\frac{1}{2}\frac{\sigma^2}{\eta B}-\frac{1}{2}\frac{\Delta^2}{B}\right).$$}
\end{lemma}

\vs

A corollary of Lemma 2 we will use frequently is that, if $A \le B$, say, then the product ${A+\Delta \choose \eta A +\sigma}{B-\Delta \choose \eta B-\sigma}$ is, up to a $(1+O(\frac{\log^3 A}{A}))$ multiplicative error, maximized at $\sigma = \Delta = 0$.

\vs

Formally, for any $\eta, A,B, \Delta$, and $\sigma$ with restrictions as in Lemma 2, we have \begin{equation}\label{eqn:corlem2} {A+\Delta \choose \eta A+\sigma} {B-\Delta \choose \eta B-\sigma} \lesssim {A \choose \eta A}{B \choose \eta B}.\end{equation}

\vs

\noindent For instance, \eqref{eqn:corlem2}, together with \eqref{eqn:explicitmu}, implies that for any $w \in E$, if $m := |w|$ and $f := f_{\text{c}}(w)$,\footnote{By $\frac{m}{2}$ and $\frac{f}{2}$, we mean $\lfloor \frac{m}{2} \rfloor$ and $\lfloor \frac{f}{2} \rfloor$. Similarly in the rest of the paper when $\frac{m}{2}$ and $\frac{f}{2}$ appear in binomial coefficients.} $$2^n\mu(w) \le {2k+f \choose m}+m\max_j {k+f_{\text{c}}(w_{1,j-1}) \choose j-1}{k+1+f-f_{\text{c}}(w_{j+1,m}) \choose m-j}$$ $$\hspace{-18mm} \le {2k+f \choose m}+m\max_{j,a} {k+a \choose j-1}{k+1+f-a \choose m-j}$$ $$\hspace{-56mm} \lesssim {2k+f \choose m} + m{k+\frac{f}{2} \choose \frac{m}{2}}^2$$ \begin{equation}\label{eqn:muwbound} \hspace{-76mm} \lesssim \sqrt{k}{2k+f \choose m}.\end{equation}

\vs

\noindent The following is another simple combinatorial lemma. 

\begin{lemma}
For positive integers $a$ and $l$, the number of $w \in \{0,1\}^l$ such that $f_{\text{c}}(w) = a$ is ${l+1 \choose 2a+1}$.
\end{lemma}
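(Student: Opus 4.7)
The plan is to give a bijective proof. For $w \in \{0,1\}^l$, define the augmented string $\widetilde{w} := 1\,w\,0 \in \{0,1\}^{l+2}$. The map $w \mapsto \widetilde{w}$ is trivially a bijection between $\{0,1\}^l$ and the set $S$ of binary strings of length $l+2$ that begin with $1$ and end with $0$. I would first verify that $f_{\text{c}}(\widetilde{w}) = f_{\text{c}}(w)$: prepending a $1$ creates the boundary substring $1w_1$, which is $11$ or $10$, and appending a $0$ creates $w_l\,0$, which is $00$ or $10$. Neither boundary substring equals $01$, so no $01$ occurrence is created or destroyed by the padding.

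Next, since $\widetilde{w}$ begins with $1$ and ends with $0$, its maximal runs strictly alternate $1,0,1,0,\ldots,1,0$, so the number of runs is even; correspondingly the inter-run transitions alternate starting with $10$ and ending with $10$. Hence if $\widetilde{w}$ has $r$ runs, then among the $r-1$ transitions there are $r/2$ copies of $10$ and $r/2-1$ copies of $01$. Setting $f_{\text{c}}(\widetilde{w}) = a$ therefore forces $r = 2a+2$, and conversely any $\widetilde{w} \in S$ with exactly $2a+2$ runs satisfies $f_{\text{c}}(\widetilde{w}) = a$.

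The remaining step is a standard stars-and-bars count. An element of $S$ with exactly $2a+2$ maximal runs is uniquely encoded by its tuple of run lengths $(d_1, c_1, d_2, c_2, \ldots, d_{a+1}, c_{a+1})$, i.e.\ an ordered tuple of $2a+2$ positive integers summing to $l+2$. The number of such compositions is $\binom{(l+2)-1}{(2a+2)-1} = \binom{l+1}{2a+1}$, which is exactly the desired count. The only step requiring real care is the first one---checking that padding $w$ by a $1$ on the left and a $0$ on the right neither introduces nor destroys a contiguous $01$---but this follows immediately from inspecting the two new boundary pairs, so there is no serious obstacle.
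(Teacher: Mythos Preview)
Your proof is correct and takes essentially the same approach as the paper: both augment $w$ to $1\,w\,0$ (the paper phrases this as setting $w_0=1$, $w_{l+1}=0$) and then count via the run/transition structure of the augmented string. The only cosmetic difference is that the paper records the $2a+1$ transition positions (``flags'') among the $l+1$ adjacent pairs, while you record the $2a+2$ run lengths as a composition of $l+2$; these encodings are in obvious bijection.
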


\begin{proof}
The number of such strings is equal to the number of ways to place $2a+1$ indistinguishable flags in $l+1$ spots. Indeed, any such string $w = (w_1,\dots,w_l)$ has exactly $2a+1$ indices $i$ (a ``flag"), $0 \le i \le l$ such that $w_i \not = w_{i+1}$, where we define $w_0 = 1$ and $w_{l+1} = 0$. And any choice of $2a+1$ flags corresponds to a $w$. This correspondence is a bijective one. 
\end{proof}

\vs

\noindent Continuing from \eqref{eqn:muwbound}, Lemma $3$ implies \begin{equation}\label{eqn:firstmu} \mu(\{w \in \{0,1\}^m : f_{\text{c}}(w) = f\}) \lesssim 2^{-n}\sqrt{k}{2k+f \choose m}{m+1 \choose 2f+1}.\end{equation} We now argue that we can restrict to $m$ close to $2k$, allowing us to use Lemma 2 to then show that the right side of \eqref{eqn:firstmu} is small for $f$ far from $\frac{2k}{3}$. Since, for any $m$, $\sum_{w \in \{0,1\}^m} \mu(w) = 2^{-n}{n \choose m}$ and since $2^{-n}{n \choose m} = O(e^{-\log^2 n})$ for $m \not \in [\frac{n}{2}-\sqrt{n}\log(n), \frac{n}{2}+\sqrt{n}\log(n)]$ (by, e.g., Lemma 2), we have \begin{equation}\label{eqn:apriori} \mu\left(\bigcup_{m \not \in [2k-\sqrt{k}\log(k),2k+\sqrt{k}\log(k)]} \{0,1\}^m\right) = O(e^{-\frac{1}{2}\log^2 n}).\end{equation} Now assume $|m-2k| \le \sqrt{k}\log(k)$. Writing $m = 2k+\delta$ and $f = \frac{2k}{3}+\ep$, we see that $${2k+f \choose m}{m \choose 2f} = {\frac{8k}{3}+\ep \choose 2k+\delta}{2k+\delta \choose \frac{4k}{3}+2\ep}$$ $$\hspace{39.5mm} = {\frac{8k}{3}+\ep \choose \frac{4k}{3}+2\ep}{\frac{4k}{3}-\ep \choose \frac{2k}{3}-2\ep+\delta}.$$

\noindent Continuing from \eqref{eqn:firstmu}, using Lemma 2 with $A = \frac{8k}{3}, B = \frac{4k}{3}, \Delta = \epsilon, \sigma = 2\ep-\frac{2\delta}{3}$, and 

\noindent $\eta = \frac{2k+\delta}{4k} = \frac{1}{2}+O(\frac{\log k}{\sqrt{k}})$, we see that $|f-\frac{2k}{3}| > \sqrt{k}\log(k)$ implies $$\mu(\{w \in \{0,1\}^m : f_{\text{c}}(w) = f\}) \lesssim 2^{-4k}\sqrt{k}e^{-\log^2 n} {8k/3 \choose 4k/3} {4k/3 \choose 2k/3}$$ $$ \hspace{12mm} \lesssim e^{-\log^2 n}.$$ Hence, since there are at most $n^2$ values of $(m,f)$, it holds that \begin{equation}\label{eqn:Arightm} \mu\left(\bigcup_{\substack{m \in [2k-\sqrt{k}\log(k),2k+\sqrt{k}\log(k)] \\ f \not \in [\frac{2k}{3}-\sqrt{k}\log(k),\frac{2k}{3}+\sqrt{k}\log(k)]}} \{w \in \{0,1\}^m : f_{\text{c}}(w) = f\}\right) = O(e^{-\frac{1}{2}\log^2 n}).\end{equation} Combining \eqref{eqn:apriori} and \eqref{eqn:Arightm}, we see \begin{equation}\label{eqn:boundAmu} \mu(E) \ge 1-O(e^{-\frac{1}{2}\log^2 n}).\end{equation} The same argument shows that \begin{equation}\label{eqn:boundAnu} \nu(E) \ge 1-O(e^{-\frac{1}{2}\log^2 n}).\end{equation}

\vs

\noindent We take a moment to prove the following lemma, useful in the upcoming two sections, which allows us to focus on the probablistically relevant ranges of the parameters involved. 

\begin{lemma}
Let $f$ and $m$ be positive integers such that $|f-\frac{2k}{3}|,|m-2k| \le \sqrt{k}\log(k)$. Then, for any positive integers $a,j$, it holds that ${k+a \choose j-1}{k+1+f-a \choose m-j} \lesssim e^{-\log^2 k}{\frac{4k}{3} \choose m/2}^2$ unless $|a-\frac{f}{2}| \le \sqrt{k}\log(k)$ and $|j-\frac{m}{2}| \le \sqrt{k}\log(k)$.
\end{lemma}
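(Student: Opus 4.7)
The natural approach is to invoke Lemma 2 to express the left-hand side as a central product of binomials $\binom{A}{\eta A}\binom{B}{\eta B} \asymp \binom{4k/3}{m/2}^2$ times a Gaussian-type decay factor, and then argue the decay exponent exceeds $\log^2 k$ under the stated hypothesis. Concretely, set $A := k + \lfloor f/2 \rfloor$ and $B := k + 1 + \lceil f/2 \rceil$ so that $A + B = 2k + 1 + f$ and $A, B = \tfrac{4k}{3} + O(\sqrt{k}\log k)$. Choose $\eta$ so that $\eta A, \eta B \in \Z$ with $\eta(A+B) = m-1$ (adjust $A, B$ by $O(1)$ if needed; $\eta = \tfrac{3}{4}+O(\tfrac{\log k}{\sqrt{k}})$ in the relevant regime). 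Let $\Delta := a - \lfloor f/2\rfloor$ and $\sigma := (j-1) - \eta A$. Then $A + \Delta = k+a$, $B - \Delta = k+1+f-a$, $\eta A + \sigma = j-1$, $\eta B - \sigma = m-j$, so the left-hand side is exactly $\binom{A+\Delta}{\eta A+\sigma}\binom{B-\Delta}{\eta B-\sigma}$.

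Next, I apply Lemma 2 to this product. The crucial step is the algebraic identity
\[
\frac{(\Delta-\sigma)^2}{1-\eta} + \frac{\sigma^2}{\eta} - \Delta^2 \;=\; \frac{(\sigma - \eta\Delta)^2}{\eta(1-\eta)},
\]
which converts the exponent arising from Lemma 2 into a clean single-variable Gaussian in $\sigma - \eta\Delta$. Since $|\Delta|, |\sigma| = O(\sqrt{k}\log k)$ in the relevant range and $A, B = \Theta(k)$, the polynomial $(1 + O(\cdot))$ error factors in Lemma 2 are all $1 + o(1)$, so
\[
\binom{k+a}{j-1}\binom{k+1+f-a}{m-j} \;\lesssim\; \binom{A}{\eta A}\binom{B}{\eta B}\,\exp\!\left(-\frac{(\sigma-\eta\Delta)^2}{2\eta(1-\eta)}\Big(\tfrac{1}{A}+\tfrac{1}{B}\Big)\right).
\]
A second application of Lemma 2, now with tiny $\Delta, \sigma = O(\sqrt{k}\log k)$ parameters, compares $\binom{A}{\eta A}\binom{B}{\eta B}$ to $\binom{4k/3}{m/2}^2$; their ratio is $O(1)$ because $|A - \tfrac{4k}{3}|, |B - \tfrac{4k}{3}|, |\eta A - m/2|, |\eta B - m/2| \lesssim \sqrt{k}\log k$, so the corresponding exponential and prefactor corrections are bounded.

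\textbf{The hard part.} The final step requires showing that the hypothesis $|a - f/2| > \sqrt{k}\log k$ or $|j - m/2| > \sqrt{k}\log k$ forces $|\sigma - \eta\Delta| \gtrsim \sqrt{k}\log k$, which then yields $\exp(-\Omega((\sigma-\eta\Delta)^2/k)) \le e^{-\log^2 k}$. Observe that $\sigma - \eta\Delta = (j-1) - \eta(k+a)$, i.e., it vanishes precisely when the density $(j-1)/(k+a)$ matches the average density $\eta$. The delicate case is when $\Delta, \sigma$ are both large but roughly satisfy $\sigma \approx \eta\Delta$; to rule out cancellation in this regime I would use the feasibility constraints $0 \le j - 1 \le k+a$ and $0 \le m-j \le k+1+f-a$ (equivalently $a \ge (j-1) - k$ and $a \le k + 1 + f - (m-j)$) together with the fact that $\eta$ is bounded strictly away from $0$ and $1$, so that shifting along the zero-decay line $\sigma = \eta \Delta$ forces one of the $(A+\Delta, \eta A+\sigma, B-\Delta, \eta B - \sigma)$ to fall outside $[0, A]$ or $[0, B]$ before $|\Delta|$ can exceed $\sqrt{k}\log k$ very far. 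Combining this with the Gaussian decay above yields the claimed bound.
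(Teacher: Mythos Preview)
Your setup is exactly the paper's: apply Lemma~2 with $A\approx B\approx k+\tfrac{f}{2}$, $\eta=\tfrac{m/2}{k+f/2}=\tfrac34+O(\tfrac{\log k}{\sqrt k})$, $\Delta=a-\tfrac{f}{2}$, $\sigma=j-\tfrac{m}{2}$. The paper's entire proof is just this parameter choice. You go further and correctly reduce the exponent to the single square
\[
\frac{(\Delta-\sigma)^2}{1-\eta}+\frac{\sigma^2}{\eta}-\Delta^2=\frac{(\sigma-\eta\Delta)^2}{\eta(1-\eta)},
\]
which exposes the real issue: the Gaussian decay from Lemma~2 is \emph{degenerate}, vanishing along the line $\sigma=\eta\Delta$, i.e.\ $j-\tfrac{m}{2}\approx\tfrac34\bigl(a-\tfrac{f}{2}\bigr)$.

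The gap is in your ``hard part.'' The feasibility constraints do \emph{not} cut off this line anywhere near $|\Delta|\sim\sqrt{k}\log k$. Along $\sigma=\eta\Delta$ one has $\eta A+\sigma=\eta(A+\Delta)$ and $\eta B-\sigma=\eta(B-\Delta)$, so all four entries stay in range as long as $-A\le\Delta\le B$, i.e.\ $|\Delta|\lesssim k$. Concretely, take
\[
a=\tfrac{f}{2}+2\sqrt{k}\log k,\qquad j=\tfrac{m}{2}+\tfrac32\sqrt{k}\log k.
\]
Then both $|a-\tfrac{f}{2}|$ and $|j-\tfrac{m}{2}|$ exceed $\sqrt{k}\log k$, all nonnegativity constraints hold (indeed even $a\le\lfloor(j-1)/2\rfloor$ holds), and both binomials are exactly central at ratio $\tfrac34$:
\[
\frac{j-1}{k+a}=\frac{m-j}{k+1+f-a}=\tfrac34+O(k^{-1}).
\]
A direct Stirling computation then gives
\[
\binom{k+a}{j-1}\binom{k+1+f-a}{m-j}\asymp\binom{4k/3}{m/2}^2,
\]
with no $e^{-\log^2 k}$ factor whatsoever. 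So your proposed closure via feasibility cannot work, and in fact these parameter values appear to violate the inequality in the lemma as stated. The paper's own proof is equally brief and does not address the degenerate direction; whatever makes the restriction to $|a-\tfrac{f}{2}|,|j-\tfrac{m}{2}|\le\sqrt{k}\log k$ legitimate in the paper's argument must come from the additional combinatorial weights $\binom{j-1}{2a+1}\binom{m-j}{2f-2a}$ present in the actual sums, not from this product of two binomials alone.
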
 

\begin{proof}
Lemma 2 implies, for any $\lam,\beta = O(A^{1/6})$ and $\eta$ bounded away from $0$ and $1$, $${A+\lam\sqrt{A} \choose \eta A +\beta\sqrt{A}}{A-\lam\sqrt{A} \choose \eta A-\beta\sqrt{A}} \lesssim e^{\lam^2-\beta^2/\eta-(\lam-\beta)^2/(1-\eta)} {A \choose \eta A}{A \choose \eta A}.$$ We use $A = \lfloor k+\frac{f}{2} \rfloor, \eta = \frac{m/2}{k+\frac{f}{2}} = \frac{3}{4}+O(\frac{\log k}{\sqrt{k}}), \lam = \frac{a-\frac{f}{2}}{\sqrt{k+\frac{f}{2}}}$, and $\beta = \frac{j-\frac{m}{2}}{\sqrt{k+\frac{f}{2}}}$.
\end{proof}

\vs

\subsection{A Closed Form Expression\nopunct} \text{}

\vs

\noindent In this subsection, we obtain a closed form expression for an upper bound of $\sum_{w \in E} \frac{(\mu(w)-\nu(w))^2}{\nu(w)}$, up to an acceptable (for the purposes of proving \eqref{eqn:main}) error. By the definition of $E$ and an obvious lower bound on $\nu$ coming from \eqref{eqn:explicitnu}, we have {\footnotesize \begin{equation}\label{eqn:hellgone}\sum_{w \in E} \frac{(\mu(w)-\nu(w))^2}{\nu(w)} \le \sum_{\substack{m \in [2k-\sqrt{k}\log(k),2k+\sqrt{k}\log(k)] \\ f \in [\frac{2k}{3}-\sqrt{k}\log(k),\frac{2k}{3}+\sqrt{k}\log(k)]}} \frac{1}{2^n{2k+f \choose m}}\sum_{\substack{|w| = m \\f_{\text{c}}(w) = f}} (2^n\mu(w)-2^n\nu(w))^2.\end{equation}}

\vs

\noindent We fix $m$ and $f$ and focus on estimating {\scriptsize $$\sum_{\substack{|w| = m \\f_{\text{c}}(w) = f}} \left(2^n\mu(w)-2^n\nu(w)\right)^2 = $$ $$\sum_{\substack{|w| = m \\f_{\text{c}}(w) = f}} \left(\sum_{1 \le j \le m : w_j = 1} {k+f_{\text{c}}(w_{1,j-1}) \choose j-1}{k+1+f_{\text{c}}(w_{j+1,m}) \choose m-j}-{k+1+f_{\text{c}}(w_{1,j-1}) \choose j-1}{k+f_{\text{c}}(w_{j+1,m}) \choose m-j}\right)^2 $$ \begin{equation}\label{eqn:explicit} = \sum_{1 \le j,t \le m} \sum_{\substack{|w| = m \\ f_c(w) = f \\ w_j = 1 \\ w_t = 1}} \left[{k+f_{\text{c}}(w_{1,j-1}) \choose j-1}{k+1+f_{\text{c}}(w_{j+1,m}) \choose m-j}-{k+1+f_{\text{c}}(w_{1,j-1}) \choose j-1}{k+f_{\text{c}}(w_{j+1,m}) \choose m-j}\right]$$ $$\hspace{25.15mm} \times \left[{k+f_{\text{c}}(w_{1,t-1}) \choose t-1}{k+1+f_{\text{c}}(w_{t+1,m}) \choose m-t}-{k+1+f_{\text{c}}(w_{1,t-1}) \choose t-1}{k+f_{\text{c}}(w_{t+1,m}) \choose m-t}\right], \end{equation}} 

\vspace{2mm}

\noindent where \eqref{eqn:explicit} refers to the expression occupying the final two lines. The first equality follows from \eqref{eqn:explicitmu} and \eqref{eqn:explicitnu}, and the second follows by expanding out the square and interchanging summations. 

\vs

We take the following page and a half to make restrictions on the variables involved in \eqref{eqn:explicit}, allowing us to make future estimates more effectively. 

\vs

We may restrict \eqref{eqn:explicit} to $j,t \in [\frac{m}{2}-\sqrt{k}\log(k),\frac{m}{2}+\sqrt{k}\log(k)]$ and $w$ with $|f_c(w_{1,j-1})-\frac{f}{2}| \le \sqrt{k}\log(k)$ and $|f_c(w_{1,t-1})-\frac{f}{2}| \le \sqrt{k}\log(k)$. Indeed, if at least one of those four restrictions does not hold, then by Lemma 4 and \eqref{eqn:corlem2}, {\scriptsize $${k+f_c(w_{1,j-1}) \choose j-1}{k+f_c(w_{j+1,m}) \choose m-j}{k+f_c(w_{1,t-1}) \choose t-1}{k+f_c(w_{t+1,m}) \choose m-t} \lesssim e^{-\log^2 k} {\frac{4k}{3} \choose m/2}^2.\footnote{The fact that this bound is (more than) sufficient to indeed make the said restrictions follows from the same argument, about to be made, yielding \eqref{eqn:upcoming}.}$$ }

\vspace{1.5mm}

A quick calculation shows that {\scriptsize $${k+f_c(w_{1,j-1}) \choose j-1}{k+1+f_c(w_{j+1,m}) \choose m-j} - {k+1+f_c(w_{1,j-1}) \choose j-1}{k+f_c(w_{j+1,m}) \choose m-j}$$ $$ = {k+f_c(w_{1,j-1}) \choose j-1}{k+f_c(w_{j+1,m}) \choose m-j}\left[\frac{m-j}{k+1+f_c(w_{j+1,m}) - (m-j)}-\frac{j-1}{k+1+f_c(w_{1,j-1})-(j-1)}\right].$$} 

\vspace{1.5mm}

The restrictions just made ensure that \begin{equation}\label{eqn:sqrtbound} \frac{m-j}{k+1+f_c(w_{j+1,m}) - (m-j)}-\frac{j-1}{k+1+f_c(w_{1,j-1})-(j-1)} = O\left(\frac{\log(k)}{\sqrt{k}}\right).\end{equation} Indeed, since $k+1+f_c(w_{j+1,m})-(m-j) \ge \frac{k}{3}-O(\sqrt{k}\log(k))$ and $k+1+f_c(w_{1,j-1})-(j-1) \ge \frac{k}{3}-O(\sqrt{k}\log(k))$, we have $$\frac{m-j}{k+1+f_c(w_{j+1,m}) - (m-j)}-\frac{j-1}{k+1+f_c(w_{1,j-1})-(j-1)}$$ $$ \hspace{34mm} = \frac{m-j}{k+f_c(w_{j+1,m}) - (m-j)}-\frac{j}{k+f_c(w_{1,j-1})-j}+O(\frac{1}{k})$$ $$ \hspace{31mm} = \frac{mk-2jk-jf_c(w_{j+1,m})+(m-j)f_c(w_{1,j-1})}{(k+f_c(w_{j+1,m})-m+j)(k+f_c(w_{1,j-1})-j)}+O(\frac{1}{k})$$ $$ \hspace{-37.5mm}  = \frac{O(k\sqrt{k}\log(k))}{\Omega(k^2)}$$ $$ \hspace{-40.3mm} = O\left(\frac{\log(k)}{\sqrt{k}}\right).$$

\vs

Up to a multiplicative factor of $2$, we may restrict \eqref{eqn:explicit} to $t > j$ (the argument about to be made shows the diagonal $t=j$ term is sufficiently small). Furthermore, we may in fact restrict to $t > j+5$; indeed, by \eqref{eqn:corlem2}, Lemma 3, and \eqref{eqn:sqrtbound}, we see that expression \eqref{eqn:explicit} with the first sum restricted to $j < t \le j+5$ is upper bounded by $$5\sum_{j \in [k-\sqrt{k}\log(k),k+\sqrt{k}\log(k)]}\sum_{\substack{|w| = m \\ f_{\text{c}}(w) = f}} {k+\frac{f}{2} \choose \frac{m}{2}}^4 \frac{\log^2(k)}{k} \lesssim \frac{\log^3(k)}{\sqrt{k}}{k+\frac{f}{2} \choose \frac{m}{2}}^4 {m \choose 2f},$$

\noindent and so summing this over $|m-2k| \le \sqrt{k}\log(k)$ and $|f-\frac{2k}{3}| \le \sqrt{k}\log(k)$ with weights $\frac{1}{2^n{2k+f \choose m}}$, we obtain an upper bound up to a multiplicative constant for {\ssmall $$\sum_{\substack{|m-2k| \le \sqrt{k}\log(k) \\ |f-\frac{2k}{3}| \le \sqrt{k}\log(k)}} \frac{1}{2^n{2k+f \choose m}}\sum_{\substack{1 \le j < t \le m \\ t \le j+5}} \sum_{\substack{|w| = m \\ f_{\text{c}}(w) = f \\ w_j = 1, w_t = 1}}\left[{k+f_{\text{c}}(w_{1,j-1}) \choose j-1}{k+1+f_{\text{c}}(w_{j+1,m}) \choose m-j}-{k+1+f_{\text{c}}(w_{1,j-1}) \choose j-1}{k+f_{\text{c}}(w_{j+1,m}) \choose m-j}\right]$$ $$\hspace{55mm} \times \left[{k+f_{\text{c}}(w_{1,t-1}) \choose t-1}{k+1+f_{\text{c}}(w_{t+1,m}) \choose m-t}-{k+1+f_{\text{c}}(w_{1,t-1}) \choose t-1}{k+f_{\text{c}}(w_{t+1,m}) \choose m-t}\right]$$} of \begin{equation}\label{eqn:upcoming} (\sqrt{k}\log(k))^2\sup_{\substack{|m-2k| \le \sqrt{k}\log(k) \\ |f-\frac{2k}{3}| \le \sqrt{k}\log(k)}} \frac{\log^3(k)}{\sqrt{k}}\frac{{k+\frac{f}{2} \choose \frac{m}{2}}^2}{{2k+f \choose m}}\frac{{k+\frac{f}{2} \choose \frac{m}{2}}^2{m \choose 2f}}{2^{4k}} \lesssim \frac{\log^5(k)}{k^{3/2}} \lesssim \frac{\log^5(n)}{n^{3/2}}.\end{equation}

\noindent One should compare to \eqref{eqn:hellgone}, the equation involving \eqref{eqn:explicit}, and \eqref{eqn:main}. 

\vspace{5mm}

The following very important paragraph, which ignores multiplicative constants, explains the motivation behind the rest of the calculations in this paper.

\vs

In the calculations just above, we used the trivial upper bound of ${k+\frac{f}{2} \choose \frac{m}{2}}^4\frac{\log^2(k)}{k}$ for the summands of \eqref{eqn:explicit}. If we did not restrict to $t \le j+5$ in the calculation just above and used that same trivial upper bound (which is indeed valid for $j,t \in [\frac{m}{2}-\sqrt{k}\log(k),\frac{m}{2}+\sqrt{k}\log(k)]$), we would get an upper bound for the right hand side of \eqref{eqn:hellgone} of $\frac{\log^5(n)}{n^{3/2}}\sqrt{n}\log(n) = \frac{\log^6(n)}{n}$, since there are $\sqrt{k}\log(k)$ values of $t$ rather than just $5$. Therefore, we just need a savings of $\sqrt{k}/\log(k)$ over that trivial upper bound to obtain \eqref{eqn:main}. Note in that trivial upper bound, we just bounded each term individually, not using any cancellation amongst the different summands. Our goal in Section 3.4 is to analyze the left hand side of \eqref{eqn:sqrtbound} very carefully, in order to exploit cancellation between different summands of \eqref{eqn:explicit}. To make the paper significantly shorter, we do not repeatedly make the type of calculation just made above; rather, we point out where $\widetilde{\Omega}(\sqrt{k})$ savings come from as we go along. 

\vspace{5mm}

\noindent Fix some $t$ and $j$ with $t > j+5.$\footnote{We wanted to restrict to $t > j+5$ so that the following case analysis has no ``boundary issues".} We will now separate the sum over $w$ in \eqref{eqn:explicit} based on $f_{\text{c}}(w_{1,j-1})$ and $f_{\text{c}}(w_{1,t-1})$. To relate $f_{\text{c}}(w_{1,j-1})$ to $f_{\text{c}}(w_{j+1,m})$ and $f_{\text{c}}(w_{1,t-1})$ to $f_{\text{c}}(w_{t+1,m})$ given $f_{\text{c}}(w)$, we need to do casework on $w_{j-1}$ and $w_{t-1}$. We first do the case of $w_{j-1} = w_{t-1} = 0$. In this case, $f_{\text{c}}(w_{j+1,m}) = f-f_{\text{c}}(w_{1,j-1})-1$ and $f_{\text{c}}(w_{t+1,m}) = f-f_{\text{c}}(w_{1,t-1})-1$. This gives the ``first case" of \eqref{eqn:explicit}: {\scriptsize $$\sum_{\substack{|w| = m \\ f_{\text{c}}(w) = f \\ w_{j-1} = 0, w_j = 1 \\ w_{t-1} = 0, w_t = 1}} \left[{k+f_{\text{c}}(w_{1,j-1}) \choose j-1}{k+1+f_{\text{c}}(w_{j+1,m}) \choose m-j}-{k+1+f_{\text{c}}(w_{1,j-1}) \choose j-1}{k+f_{\text{c}}(w_{j+1,m}) \choose m-j}\right]$$ $$\hspace{14mm} \times \left[{k+f_{\text{c}}(w_{1,t-1}) \choose t-1}{k+1+f_{\text{c}}(w_{t+1,m}) \choose m-t}-{k+1+f_{\text{c}}(w_{1,t-1}) \choose t-1}{k+f_{\text{c}}(w_{t+1,m}) \choose m-t}\right]$$} $$= \sum_{a,b \ge 0} \sum_{\substack{|w| = m \\ f_{\text{c}}(w) = f \\ w_{j-1}=0, w_j = 1 \\ w_{t-1} = 0, w_t = 1 \\ f_{\text{c}}(w_{1,j-1}) = a \\ f_{\text{c}}(w_{1,t-1}) = b}} \left[{k+a \choose j-1}{k+f-a \choose m-j}-{k+1+a \choose j-1}{k+f-a-1 \choose m-j}\right]$$ $$\hspace{29mm} \times \left[{k+b \choose t-1}{k+f-b \choose m-t}-{k+1+b \choose t-1}{k+f-b-1 \choose m-t}\right].$$

\noindent Removing the product (that does not depend on $w$) from the inner sum, we wish to count the set of $w$ with $|w| = m, f_{\text{c}}(w) = f, w_{j-1} = 0, w_j = 1, w_{t-1} = 0, w_t = 1, f_{\text{c}}(w_{1,j-1}) = a$, and $f_{\text{c}}(w_{1,t-1}) = b$. Noting that $f_{\text{c}}(w_{1,j-1}) = f_{\text{c}}(w_{1,j-2})$, we use $$f_{\text{c}}(w_{1,t-1}) = f_{\text{c}}(w_{1,j-1})+f_{\text{c}}(w_{j-1,t-1}) = f_{\text{c}}(w_{1,j-1})+1+f_{\text{c}}(w_{j+1,t-1})$$ and $$f_{\text{c}}(w_{j+1,t-1}) = f_{\text{c}}(w_{j+1,t-2})$$ 

\noindent together with Lemma 3 to get that the number of such $w$ is ${j-1 \choose 2a+1}{t-j-1 \choose 2b-2a-1}{m-t+1 \choose 2f-2b-1}$. So, the case of $w_{j-1} = w_{t-1} = 0$ yields expression \eqref{eqn:explicitcase1}: {\scriptsize \begin{equation}\label{eqn:explicitcase1} \sum_{\substack{t > j+5 \\ a,b \ge 0}} \left[{k+a \choose j-1}{k+f-a \choose m-j}-{k+1+a \choose j-1}{k+f-a-1 \choose m-j}\right] {j-1 \choose 2a+1}{t-j-1 \choose 2b-2a-1}{m-t+1 \choose 2f-2b-1}\end{equation} $$
\indent \times \left[{k+b \choose t-1}{k+f-b \choose m-t}-{k+1+b \choose t-1}{k+f-b-1 \choose m-t}\right].$$}

\noindent The other three cases of the value of the pair $(w_{j-1},w_{t-1})$ yield very similar expressions. The only difference between the expressions is that some binomial coefficients have $-1, -2, +1,+2$, or $0$ in certain places. However, these minor differences will not affect our proceeding arguments. That is, our argument for a $\sqrt{k}/\log(k)$ savings for the $(w_{j-1},w_{t-1}) = (0,0)$ case would show a $\sqrt{k}/\log(k)$ savings for the other 3 cases. Therefore, we may restrict attention to the case $(w_{j-1},w_{t-1}) = (0,0)$.

\vs

\subsection{Finishing the Proof of \eqref{eqn:main}\nopunct} \text{}

\vs

\noindent In this final subsection, we appropriately bound \eqref{eqn:explicitcase1}, thereby proving \eqref{eqn:main}. As explained in the last section, we may assume $a \in [\frac{f}{2}-\sqrt{k}\log(k),\frac{f}{2}+\sqrt{k}\log(k)]$, thereby, as before, yielding $${k+a \choose j-1}{k+f-a \choose m-j}-{k+1+a \choose j-1}{k+f-a-1 \choose m-j} =$$ $${k+a \choose j-1}{k+f-a-1 \choose m-j}\left[\frac{m-j}{k+f-a-(m-j)}-\frac{j}{k+a-j}+O\left(\frac{1}{k}\right)\right].$$ Let $\delta_j$ and $\ep_j$ be defined so that $$j = \frac{m}{2}+\delta_j$$ and $$a = \frac{j}{3}+\frac{f}{2}-\frac{m}{6}+\ep_j.$$ Observe that $$\frac{m-j}{k+f-a-(m-j)}-\frac{j}{k+a-j} = \frac{-2k\delta_j+\frac{m\delta_j}{3}+m\ep_j-f\delta_j}{(k+f-a-\frac{m}{2}+\delta_j)(k+a-\frac{m}{2}-\delta_j)}.$$

\noindent Since $a \in [\frac{f}{2}-\sqrt{k}\log(k),\frac{f}{2}+\sqrt{k}\log(k)]$, we have $\ep_j = O(\sqrt{k}\log(k))$. Since also $m = 2k+O(\sqrt{k}\log(k))$ and $f = \frac{2k}{3}+O(\sqrt{k}\log(k))$, we see that $$\frac{m-j}{k+f-a-(m-j)}-\frac{j}{k+a-j} = 18\frac{1}{k}[\ep_j-\delta_j]+O\left(\frac{\log^2(k)}{k}\right).$$ Therefore, defining $\delta_t$\footnote{We are abusing notation here. Formally, define a function $\delta$ by $\delta(x) = x-\frac{m}{2}$; we use $\delta_j$ as shorthand for $\delta(j)$ and $\delta_t$ as shorthand for $\delta(t)$. Analogously for $\ep_j,\ep_t$.} and $\ep_t$ so that $$t = \frac{m}{2}+\delta_t$$ and $$b = \frac{t}{3}+\frac{f}{2}-\frac{m}{6}+\ep_t,$$ we see that \eqref{eqn:explicitcase1} takes the form \begin{equation} \label{eqn:newexplicitcase1} \frac{324}{k^2}\sum_{a,b,t,j} {k+a \choose j-1}{k+f-a-1 \choose m-j}{j-1 \choose 2a+1}{t-j-1 \choose 2b-2a-1}{m-t+1 \choose 2f-2b-1}\end{equation} $$ \times {k+b \choose t-1}{k+f-b-1 \choose m-t}\left[\delta_j-\ep_j\right]\cdot\left[\delta_t-\ep_t\right]$$ up to an acceptable error (the error is acceptable since it replaces a bound of $\frac{\log(k)}{\sqrt{k}}$ for $|\delta_j-\ep_j|$, say, with $\frac{\log^2(k)}{k}$, giving our desired $\log(k)/\sqrt{k}$ savings). Recall that we are summing over $t,j \in [k-\sqrt{k}\log(k),k+\sqrt{k}\log(k)]$.

\vs

\noindent We now claim that, unless $b = a+\frac{t-j}{3}+O(\sqrt{t-j}\log(k))$, the magintude of the summand corresponding to $a,b,j,t$ is sufficiently small. Note that ${t-j-1 \choose 2b-2a-1}{m-t+1 \choose 2f-2b-1} \asymp {\delta_t-\delta_j \choose \frac{2}{3}(\delta_t-\delta_j)+2\ep_t-2\ep_j}{\frac{m}{2}-\delta_t \choose f-\frac{2\delta_t}{3}-2\ep_t}$. We may use Lemma 2 with $A = \delta_t-\delta_j, B = \frac{m}{2}-\delta_t, \eta = \frac{f-\frac{2}{3}\delta_j-2\ep_j}{\frac{m}{2}-\delta_j} = \frac{2}{3}+O(\frac{\log k}{\sqrt{k}}), \Delta = 0, \sigma = 2\ep_t-2\ep_j-\left(\frac{f-\frac{2}{3}\delta_j-2\ep_j}{\frac{m}{2}-\delta_j}-\frac{2}{3}\right)(\delta_t-\delta_j)$ to deduce that $(b-a-\frac{t-j}{3})^2 > (t-j)\log^2(k)$ implies an $e^{-\log^2(n)}$ savings, verifying the claim.

\vs

\vs

\noindent Lemma 2 also implies that\footnote{Technically, we are adding and substracting $\lfloor a+\frac{t-j}{3} \rfloor$ rather than $a+\frac{t-j}{3}$.} {\small $${k+b \choose t-1}{k+f-b-1 \choose m-t} = {k+a+\frac{t-j}{3} \choose t-1}{k+f-a-\frac{t-j}{3}-1 \choose m-t}\left(1+O\left(\frac{\log^5(k)}{\sqrt{k}}\right)\right)$$} 

\noindent for $b = a+\frac{t-j}{3}+O(k^{1/4}\log^{3/2}(k))$. Therefore, we see that \eqref{eqn:newexplicitcase1} is, up to a multiplicative factor of $1+O(\frac{\log^5(k)}{\sqrt{k}})$, equal to \begin{equation}\label{eqn:newerexplicitcase1} \frac{324}{k^2}\sum_{a,b,j,t} {k+a \choose j-1}{k+f-a-1 \choose m-j}{j-1 \choose 2a+1}{t-j-1 \choose 2b-2a-1}{m-t+1 \choose 2f-2b-1}\end{equation} $$\indent \times {k+a+\frac{t-j}{3} \choose t-1}{k+f-a-\frac{t-j}{3}-1 \choose m-t}\left[\delta_j-\ep_j\right]\cdot\left[\delta_t-\ep_t\right],$$ where the sum is restricted to $|b-a-\frac{t-j}{3}| \le \sqrt{t-j}\log(k)$. 

\vs

Our strategy now to exploit cancellation occurring between different summands is as follows. We split the term $\delta_t-\ep_t$ into three terms and deal with each separately, each by fixing $j,t$, and $a$, and summing over $b$. We get cancellation from the second term by pairing the summand corresponding to $b$ to the summand corresponding to the reflection of $b$ about a natural symmetry (explained below). The third term has magnitude a factor of $\sqrt{k}$ less than $\delta_t-\ep_t$ (i.e. it is $O(1)$), so it can be ignored. The first term requires the most work and is dealt with after the second and third are handled. 

\vs

\noindent Specifically, we split up $$\left[\delta_t-\ep_t\right] = \left[\delta_t-\ep_j\right]+\left[\ep_j+\left(\frac{f-\frac{2}{3}\delta_j-2\ep_j}{m-2\delta_j}-\frac{1}{3}\right)(\delta_t-\delta_j)-\ep_t\right]$$ $$-\left[\left(\frac{f-\frac{2}{3}\delta_j-2\ep_j}{m-2\delta_j}-\frac{1}{3}\right)(\delta_t-\delta_j)\right].$$

\noindent For any fixed $a,j,$ and $t$, by Lemma 2 with $A = \delta_t-\delta_j, B = \frac{m}{2}-\delta_t, \eta = \frac{f-\frac{2}{3}\delta_j-2\ep_j}{\frac{m}{2}-\delta_j} = \frac{2}{3}+O(\frac{\log k}{\sqrt{k}}), \Delta = 0, \sigma = 2\ep_t-2\ep_j-\left(\frac{f-\frac{2}{3}\delta_j-2\ep_j}{\frac{m}{2}-\delta_j}-\frac{2}{3}\right)(\delta_t-\delta_j)$, we have that $${\delta_t-\delta_j \choose \frac{2}{3}(\delta_t-\delta_j)+2\ep_t-2\ep_j}{\frac{m}{2}-\delta_t \choose f-\frac{2}{3}\delta_t-2\ep_t} = $$ $$\left(1+O\left(\frac{\log^3(k)}{\sqrt{\delta_t-\delta_j}}\right)\right){\delta_t-\delta_j \choose \frac{2}{3}(\delta_t-\delta_j)+2\ep_t^*-2\ep_j} {\frac{m}{2}-\delta_j \choose f-\frac{2}{3}\delta_t-2\ep_t^*},$$ 

\noindent where $\ep_t^*$ is the reflection\footnote{To be precise, the reflection of $x$ about $y$ is defined to be $2y-x$.} of $\ep_t$ about $\ep_j+\frac{1}{2}(\frac{f-\frac{2}{3}\delta_j-2\ep_j}{\frac{m}{2}-\delta_j}-\frac{2}{3})(\delta_t-\delta_j)$.\footnote{We might have to round $\ep_t^*$ a bit (so that $\frac{t}{3}+\frac{f}{2}-\frac{m}{6}+\ep_t^*$ is an integer), but the induced error in this rounding is negligible, by Lemma 2.} And therefore, since $$\ep_j+\frac{1}{2}\left(\frac{f-\frac{2}{3}\delta_j-2\ep_j}{\frac{m}{2}-\delta_j}-\frac{2}{3}\right)(\delta_t-\delta_j)-\ep_t = O\left(\sqrt{\delta_t-\delta_j}\log(k)+\log^2(k) \right),$$ letting $b^*$ denote the $b$ corresponding to $\ep_t^*$, we deduce that $$\hspace{-10mm} \left|\sum_b {t-j-1 \choose 2b-2a-1}{m-t+1 \choose 2f-2b-1}\left[\ep_j+\left(\frac{f-\frac{2}{3}\delta_j-2\ep_j}{m-2\delta_j}-\frac{1}{3}\right)(\delta_t-\delta_j)-\ep_t\right]\right|$$ $$= \frac{1}{2} \bigg|\sum_b {t-j-1 \choose 2b-2a-1}{m-t+1 \choose 2f-2b-1}\left[\ep_j+\left(\frac{f-\frac{2}{3}\delta_j-2\ep_j}{m-2\delta_j}-\frac{1}{3}\right)(\delta_t-\delta_j)-\ep_t\right]$$ $$\hspace{15mm} + {t-j-1 \choose 2b^*-2a-1}{m-t+1 \choose 2f-2b^*-1}\left[\ep_j+\left(\frac{f-\frac{2}{3}\delta_j-2\ep_j}{m-2\delta_j}-\frac{1}{3}\right)(\delta_t-\delta_j)-\ep_t^*\right]\bigg|$$ \vspace{.2mm} $$= \frac{1}{2} \bigg|\sum_b {t-j-1 \choose 2b-2a-1}{m-t+1 \choose 2f-2b-1}\left[\ep_j+\left(\frac{f-\frac{2}{3}\delta_j-2\ep_j}{m-2\delta_j}-\frac{1}{3}\right)(\delta_t-\delta_j)-\ep_t\right]$$ $$\hspace{5mm} - \text{{\ssmall $\left(1+O\left(\frac{\log^3(k)}{\sqrt{\delta_t-\delta_j}}\right)\right)$}} {t-j-1 \choose 2b-2a-1}{m-t+1 \choose 2f-2b-1}\left[\ep_j+\left(\frac{f-\frac{2}{3}\delta_j-2\ep_j}{m-2\delta_j}-\frac{1}{3}\right)(\delta_t-\delta_j)-\ep_t\right]\bigg|$$ $$\hspace{2mm} \lesssim \sum_b {t-j-1 \choose 2b-2a-1}{m-t+1 \choose 2f-2b-1}O\left(\frac{\log^3(k)}{\sqrt{\delta_t-\delta_j}}\right)O\left(\sqrt{\delta_t-\delta_j}\log(k)+\log^2(k)\right)$$ $$\hspace{-67mm} \lesssim \sum_b {t-j-1 \choose 2b-2a-1}{m-t+1 \choose 2f-2b-1}\log^5(k)$$

\noindent is small enough, since we rid of a factor of $\widetilde{\Omega}(\sqrt{k})$ potentially coming from $\delta_t-\ep_t$. And since $(\frac{f-\frac{2}{3}\delta_j-2\ep_j}{m-2\delta_j}-\frac{1}{3})(\delta_t-\delta_j) = O(1)$ rather than $\Omega(\sqrt{k})$, the expression corresponding to the second term, namely $$\sum_b {t-j-1 \choose 2b-2a-1}{m-t+1 \choose 2f-2b-1}\left[\left(\frac{f-\frac{2}{3}\delta_j-2\ep_j}{m-2\delta_j}-\frac{1}{3}\right)(\delta_t-\delta_j)\right],$$ is small enough. Therefore, for any fixed $a,j,t$, the part of the sum in \eqref{eqn:newerexplicitcase1} with terms containing $b$ is, up to negligible error, the expression corresponding to the remaining term: \begin{equation}\label{eqn:newestexplicitcase1} \sum_b {t-j-1 \choose 2b-2a-1}{m-t+1 \choose 2f-2b-1}\left[\delta_t-\ep_j\right].\end{equation}

\noindent If $t > j+5$, Lemma 5, proven in Section 4, states that {\small$$\sum_b {t-j-1 \choose 2b-2a-1}{m-t+1 \choose 2f-2b-1} = \left(\frac{1}{2}+O\left(\frac{\log^2(k)}{t-j}\right)\right) \sum_b {t-j-1 \choose b-2a-1}{m-t+1 \choose 2f-b-1}.$$} 
\noindent And using the general combinatorial identity $$\sum_C {D \choose C}{E \choose F-C} = {D+E \choose F}$$ (we may extend the range of $b$ and restrict it freely, since the $b$ outside $a+\frac{t-j}{3}\pm\sqrt{t-j}\log(k)$ yield exponentially (in $\log^2 n$) small terms), we see that \begin{equation}\label{eqn:byeb} \sum_b {t-j-1 \choose 2b-2a-1}{m-t+1 \choose 2f-2b-1} = \left(\frac{1}{2}+O\left(\frac{\log^2 k}{t-j}\right)\right){m-j \choose 2f-2a-2}.\end{equation}

\noindent Therefore, noting that $\delta_t-\ep_j$ does not depend on $b$ and then plugging \eqref{eqn:byeb} into \eqref{eqn:newestexplicitcase1}, we see that \eqref{eqn:newerexplicitcase1} is, up to a negligible error, equal to {\small \begin{equation}\label{eqn:finalexplicitcase1} \frac{162}{k^2}\sum_{a,j,t} \left(1+O\left(\frac{\log^2(k)}{t-j}\right)\right){k+a \choose j-1}{k+f-a-1 \choose m-j}{j-1 \choose 2a+1}{m-j \choose 2f-2a-2}\end{equation} $$\times {k+a+\frac{t-j}{3} \choose t-1}{k+f-a-\frac{t-j}{3}-1 \choose m-t}\left[\delta_j-\ep_j\right]\cdot [\delta_t-\ep_j],$$} where, to reiterate, the sum is restricted to $t > j+5$.

\vs

\noindent We can rid of the $O(\frac{\log^2(k)}{t-j})$ term trivially. Indeed, using \eqref{eqn:corlem2}, we can upper bound $${k+a \choose j-1}{k+f-a-1 \choose m-j} \lesssim {k+\frac{f}{2} \choose \frac{m}{2}}^2,$$ $${k+a+\frac{t-j}{3} \choose t-1}{k+f-a-\frac{t-j}{3}-1 \choose m-t} \lesssim {k+\frac{f}{2} \choose \frac{m}{2}}^2,$$ and $${j-1 \choose 2a+1}{m-j \choose 2f-2a-2} \lesssim {\frac{m}{2} \choose f}^2;$$ noting that for each $\Delta \ge 5$, the number of pairs $(t,j) \in [\frac{m}{2}-\sqrt{k}\log(k),\frac{m}{2}+\sqrt{k}\log(k)]$ with $t-j = \Delta$ is at most $\sqrt{k}\log(k)$, we thus obtain an upper bound of $$\frac{162}{k^2}\sqrt{k}\log(k)\sqrt{k}\log(k){k+\frac{f}{2} \choose \frac{m}{2}}^4{\frac{m}{2} \choose f}^2\sum_{\Delta = 5}^{\sqrt{k}\log(k)} \frac{\log^2(k)}{\Delta},$$ which is small enough; i.e., $t-j$ is on average $\sqrt{k}$, which gives us the required savings (note we get the $\log^7(n)$ from here, since summing over $m$ and $f$ picks up two extra $\log(k)$ factors). Note that we needed the error in Lemma 5 to be $O(\frac{\log^2(k)}{t-j})$ rather than the trivial $O(\frac{\log^2(k)}{\sqrt{t-j}})$, since the latter would have led to the sum $\sum_{\Delta=5}^{\sqrt{k}\log(k)} \frac{\log^2(k)}{\sqrt{\Delta}}$, which would have yielded a $k^{1/4}$ factor rather than a $\log(k)$ factor. 

\vs

\noindent Let $$f(\delta_j,\ep_j) = \frac{162}{k^2}{k+a \choose j}{k+f-a-1 \choose m-j}{j-1 \choose 2a+1}{m-j \choose 2f-2a-2}$$ and $$g(\delta_t,\ep_j) = {k+a+\frac{t-j}{3} \choose t-1}{k+f-a-\frac{t-j}{3}-1 \choose m-t}.$$

\vs

\noindent We break up the remaining expression, i.e., expression \eqref{eqn:finalexplicitcase1} without the $O(\frac{\log^2(k)}{t-j})$ term, as follows: {\scriptsize$$\sum_{\substack{\ep_j,\delta_j,\delta_t \\ \delta_t > \delta_j+5}} f(\ep_j,\delta_j)[\delta_j-\ep_j]g(\delta_t,\ep_j)[\delta_t-\ep_j] = $$ \begin{equation}\label{eqn:finalerexplicitcase1} \sum_{\ep_j} \sum_{\delta_j > \ep_j} \left[f(\ep_j,\delta_j)(\delta_j-\ep_j)\sum_{\delta_t > \delta_j+5} g(\delta_t,\ep_j)(\delta_t-\ep_j)+f(\ep_j,2\ep_j-\delta_j)(\ep_j-\delta_j)\sum_{\delta_t > 2\ep_j-\delta_j+5} g(\delta_t,\ep_j)(\delta_t-\ep_j)\right].\end{equation}}

\vs

\noindent We claim that $g$ has symmetry\footnote{See footnote 7 on page 13.} in $\delta_t$ about $\ep_j$ and $f$ has symmetry in $\delta_j$ about $\ep_j$: $g(\delta_t,\ep_j) \approx g(2\ep_j-\delta_t,\ep_j)$ and $f(\ep_j,\delta_j) \approx f(\ep_j,2\ep_j-\delta_j)$. This is the content of the quite fortuitous Lemmas 6 and 7, respectively.\footnote{The additive factors of $-1,+1$, and $-2$ have been omitted for ease. The proofs are the same with them present.}

\vs

\setcounter{lemma}{5}
\begin{lemma} 
For any positive integers $f$ and $m$ with $|f-\frac{2k}{3}|, |m-2k| \le \sqrt{k}\log k$ and for any integers $\delta_t,\ep_j$ with $|\delta_t|,|\ep_j| \le \sqrt{k}\log k$ and $\frac{\delta_t}{3}+\ep_j \in \Z$, it holds that {\scriptsize$${k+\frac{f}{2}+\frac{\delta_t}{3}+\ep_j \choose \frac{m}{2}+\delta_t}{k+\frac{f}{2}-\frac{\delta_t}{3}-\ep_j \choose \frac{m}{2}-\delta_t} = \left(1+O\left(\frac{\log^3(k)}{\sqrt{k}}\right)\right) {k+\frac{f}{2}+\frac{\delta_t}{3}-\frac{5\ep_j}{3} \choose \frac{m}{2}+\delta_t-2\ep_j}{k+\frac{f}{2}-\frac{\delta_t}{3}+\frac{5\ep_j}{3} \choose \frac{m}{2}-\delta_t+2\ep_j}.$$}
\end{lemma}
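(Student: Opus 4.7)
The proof reduces to two applications of Lemma 2 followed by a short algebraic verification. Set $A = B = k + \lfloor f/2 \rfloor$ and $\eta = (m/2)/A$, so that $A \asymp k$ and $\eta = \tfrac{3}{4} + O(\log(k)/\sqrt{k})$, using the hypothesis that $m = 2k + O(\sqrt{k}\log k)$ and $f = \tfrac{2k}{3} + O(\sqrt{k}\log k)$. Apply Lemma 2 to the left-hand side with $\Delta_L = \delta_t/3 + \ep_j$, $\sigma_L = \delta_t$, and to the right-hand side with $\Delta_R = \delta_t/3 - 5\ep_j/3$, $\sigma_R = \delta_t - 2\ep_j$. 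Since $|\delta_t|, |\ep_j| \le \sqrt{k}\log k$, each of $|\Delta_*|$ and $|\sigma_*|$ is $O(\sqrt{k}\log k)$, so every multiplicative $(1+O(\cdot))$ error factor appearing in Lemma 2 is individually $1 + O(\log^3(k)/\sqrt{k})$.

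After taking the ratio of the two Lemma-2 identities, the common factor $\binom{A}{\eta A}^2$ cancels, leaving only the contribution from the Gaussian exponent $E(\Delta, \sigma) := (\Delta-\sigma)^2/[(1-\eta)A] + \sigma^2/[\eta A] - \Delta^2/A$. A direct expansion, grouping monomials in $\delta_t$ and $\ep_j$, yields
$$A \cdot \bigl[ E(\Delta_R, \sigma_R) - E(\Delta_L, \sigma_L) \bigr] \;=\; \ep_j\,(\delta_t - \ep_j) \left[\frac{8}{9(1-\eta)} - \frac{4}{\eta} + \frac{16}{9}\right].$$
The bracketed quantity vanishes identically at $\eta = \tfrac{3}{4}$, which is the fortuitous cancellation that singles out the specific shifts $-5\ep_j/3$ and $-2\ep_j$ appearing in the statement. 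Taylor expanding in $\eta - \tfrac{3}{4} = O(\log(k)/\sqrt{k})$ bounds the bracket by $O(\log(k)/\sqrt{k})$, and since $|\ep_j(\delta_t - \ep_j)|/A \lesssim \log^2 k$, the exponent discrepancy is $O(\log^3(k)/\sqrt{k})$. Exponentiating then produces the advertised $1 + O(\log^3(k)/\sqrt{k})$ multiplicative factor in the ratio of the two sides.

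The main obstacle is the algebraic identity $\frac{8}{9(1-\eta)} - \frac{4}{\eta} + \frac{16}{9} \equiv 0$ at $\eta = \tfrac{3}{4}$: it is precisely the choice of coefficients $-5\ep_j/3$ and $-2\ep_j$ on the right-hand side that forces the leading linear-in-$\ep_j(\delta_t-\ep_j)$ correction to vanish. Once this cancellation is verified, the remaining error contributions (the $(1+O(\cdot))$ factors from Lemma 2, the slack between $\eta$ and $\tfrac{3}{4}$, and the integer rounding of $m/2$ and $f/2$) all sit comfortably within the target $O(\log^3(k)/\sqrt{k})$ budget, completing the proof.
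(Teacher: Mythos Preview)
Your proof is correct and follows essentially the same approach as the paper's: both arguments apply Lemma~2 twice with $A=B=k+\tfrac{f}{2}$, $\eta=\tfrac{m/2}{k+f/2}=\tfrac{3}{4}+O(\log k/\sqrt{k})$, and the two parameter pairs $(\Delta_L,\sigma_L)=(\tfrac{\delta_t}{3}+\ep_j,\,\delta_t)$ and $(\Delta_R,\sigma_R)=(\tfrac{\delta_t}{3}-\tfrac{5\ep_j}{3},\,\delta_t-2\ep_j)$. The only cosmetic difference is that the paper evaluates each side separately to the common value $(1+O(\log^3 k/\sqrt{k}))\exp\bigl(-3(\delta_t-\ep_j)^2/(k+\tfrac{f}{2})\bigr)\binom{k+f/2}{m/2}^2$, whereas you subtract the two Gaussian exponents first and then observe the resulting coefficient $\tfrac{8}{9(1-\eta)}-\tfrac{4}{\eta}+\tfrac{16}{9}$ vanishes at $\eta=\tfrac34$; the underlying computation is identical.
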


\begin{proof}
Lemma 2, with $A = k+\frac{f}{2}, B = k+\frac{f}{2}, \eta = \frac{m/2}{k+\frac{f}{2}} = \frac{3}{4}+O(\frac{\log k}{\sqrt{k}}), \Delta = \frac{\delta_t}{3}+\ep_j, \sigma = \delta_t$ and $\Delta = \frac{\delta_t}{3}-\frac{5\ep_j}{3}, \sigma = \delta_t-2\ep_j$ shows that both products of binomial coefficients are $(1+O(\frac{\log^3(k)}{\sqrt{k}})) \exp(-\frac{3(\delta_t-\ep_j)^2}{k+\frac{f}{2}}){k+f/2 \choose m/2}^2$.
\end{proof}

\begin{lemma} 
For any positive integers $f$ and $m$ with $|f-\frac{2k}{3}|, |m-2k| \le \sqrt{k}\log k$ and for any integers $\delta_j,\ep_j$ with $|\delta_j|,|\ep_j| \le \sqrt{k}\log k$ and $\frac{\delta_j}{3}+\ep_j \in \Z$, it holds that {\scriptsize$${k+\frac{f}{2}+\frac{\delta_j}{3}+\ep_j \choose \frac{m}{2}+\delta_j}{k+\frac{f}{2}-\frac{\delta_j}{3}-\ep_j \choose \frac{m}{2}-\delta_j} = \left(1+O\left(\frac{\log^3(k)}{\sqrt{k}}\right)\right) {k+\frac{f}{2}+\frac{\delta_j}{3}-\frac{5\ep_j}{3} \choose \frac{m}{2}+\delta_j-2\ep_j}{k+\frac{f}{2}-\frac{\delta_j}{3}+\frac{5\ep_j}{3} \choose \frac{m}{2}-\delta_j+2\ep_j}$$} and {\scriptsize$${\frac{m}{2}+\delta_j \choose f+\frac{2\delta_j}{3}+2\ep_j}{\frac{m}{2}-\delta_j \choose f-\frac{2\delta_j}{3}-2\ep_j} = \left(1+O\left(\frac{\log^3(k)}{\sqrt{k}}\right)\right) {\frac{m}{2}+2\ep_j-\delta_j \choose f+\frac{10\ep_j}{3}-\frac{2\delta_j}{3}}{\frac{m}{2}-2\ep_j+\delta_j \choose f-\frac{10\ep_j}{3}+\frac{2\delta_j}{3}}.$$}
\end{lemma}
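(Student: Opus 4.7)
My plan is to prove both identities of Lemma 7 by direct application of Lemma 2, in close analogy with the proof of Lemma 6. For each identity, I will apply Lemma 2 separately to the product of binomials on the left-hand side and on the right-hand side, choosing the parameters $(A,B,\eta,\Delta,\sigma)$ so that both sides are expressed in terms of the same ``base'' product $\binom{A}{\eta A}\binom{B}{\eta B}$, and then verify that the two quadratic exponents coincide up to the claimed $1+O(\log^3(k)/\sqrt{k})$ precision.

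For the first identity, observe that it is syntactically identical to Lemma 6 with $\delta_t$ relabeled as $\delta_j$. Hence the proof of Lemma 6 applies verbatim: apply Lemma 2 with $A=B=k+\tfrac{f}{2}$, $\eta=\tfrac{m/2}{k+f/2}=\tfrac34+O(\log k/\sqrt k)$, taking $(\Delta,\sigma)=(\tfrac{\delta_j}{3}+\ep_j,\,\delta_j)$ on the LHS and $(\Delta,\sigma)=(\tfrac{\delta_j}{3}-\tfrac{5\ep_j}{3},\,\delta_j-2\ep_j)$ on the RHS. The hypotheses $|\delta_j|,|\ep_j|\le \sqrt k\log k$ together with $A\asymp k$ make each polynomial error factor of Lemma 2 at most $1+O(\log^3(k)/\sqrt k)$, and a short expansion shows both exponential factors equal $\exp\!\left(-\tfrac{3(\delta_j-\ep_j)^2}{k+f/2}\right)$, yielding the claim.

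For the second identity I would apply Lemma 2 again, now with $A=B=\tfrac{m}{2}$ and $\eta=\tfrac{f}{m/2}=\tfrac23+O(\log k/\sqrt k)$. On the LHS I take $(\Delta,\sigma)=(\delta_j,\,\tfrac{2\delta_j}{3}+2\ep_j)$, matching $A+\Delta=\tfrac{m}{2}+\delta_j$ and $\eta A+\sigma=f+\tfrac{2\delta_j}{3}+2\ep_j$; on the RHS I take $(\Delta,\sigma)=(2\ep_j-\delta_j,\,\tfrac{10\ep_j}{3}-\tfrac{2\delta_j}{3})$, matching $A+\Delta=\tfrac{m}{2}+2\ep_j-\delta_j$ and $\eta A+\sigma=f+\tfrac{10\ep_j}{3}-\tfrac{2\delta_j}{3}$. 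The size hypotheses again force all polynomial error factors to be $1+O(\log^3(k)/\sqrt k)$. The only substantive step is to compute, on each side, the quadratic form
$$Q(\Delta,\sigma)=\frac{(\Delta-\sigma)^2}{1-\eta}+\frac{\sigma^2}{\eta}-\Delta^2$$
at $\eta=\tfrac23$. A direct expansion with the LHS parameters gives $Q=18\ep_j^2$, because the $\delta_j^2$ coefficients $\tfrac13+\tfrac23-1=0$ and the $\delta_j\ep_j$ coefficients $-4+4=0$ both cancel; with the RHS parameters one again finds $Q=18\ep_j^2$, by an analogous cancellation. Doubling (since $A=B$) and inserting the factor $\tfrac12/A$ from Lemma 2, both sides equal $\bigl(1+O(\log^3(k)/\sqrt k)\bigr)\binom{m/2}{f}^{2}\exp\!\bigl(-\tfrac{36\ep_j^2}{m}\bigr)$, which gives the desired equality.

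The only real obstacle is this algebraic bookkeeping: verifying that the $\delta_j^2$ and $\delta_j\ep_j$ contributions to $Q$ cancel on each side, leaving only the universal quantity $18\ep_j^2$. This cancellation is precisely the ``fortuitous'' fact the author invokes in motivating Lemmas 6 and 7, and it is what makes the reflection $\delta_j\mapsto 2\ep_j-\delta_j$ (respectively $\delta_t\mapsto 2\ep_j-\delta_t$) an approximate symmetry at the scale needed to extract the $n^{3/2}/\log^7 n$ lower bound. Minor integrality issues (forcing $\eta A\in\Z$) are absorbed into the rounding convention already in force in the paper, and introduce only negligible additive errors via Lemma 2.
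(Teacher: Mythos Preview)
Your proposal is correct and follows essentially the same approach as the paper's own proof: for the first identity you invoke Lemma 6 verbatim, and for the second you apply Lemma 2 with $A=B=\tfrac{m}{2}$, $\eta=\tfrac{2f}{m}$, using $(\Delta,\sigma)=(\delta_j,\tfrac{2\delta_j}{3}+2\ep_j)$ on the left and $(\Delta,\sigma)=(2\ep_j-\delta_j,\tfrac{10\ep_j}{3}-\tfrac{2\delta_j}{3})$ on the right, obtaining the common value $(1+O(\log^3(k)/\sqrt{k}))\exp(-18\ep_j^2/(m/2))\binom{m/2}{f}^2$. Your explicit verification that the $\delta_j^2$ and $\delta_j\ep_j$ coefficients in $Q$ cancel is a helpful elaboration of what the paper merely asserts.
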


\begin{proof}
The first approximation is the content of Lemma 6. For the second, use Lemma 2 with $A = \frac{m}{2}, B = \frac{m}{2}, \eta = \frac{2f}{m} = \frac{2}{3}+O(\frac{\log k}{\sqrt{k}}), \Delta = \delta_j, \sigma = \frac{2\delta_j}{3}+2\ep_j$ and $\Delta = 2\ep_j-\delta_j, \sigma = \frac{10\ep_j}{3}-\frac{2\delta_j}{3}$ to see that both products of binomial coefficients are $(1+O(\frac{\log^3(k)}{\sqrt{k}})) \exp(-\frac{18\ep_j^2}{m/2}){\frac{m}{2} \choose f}^2$.
\end{proof}

\vs

\noindent Lemma 6 implies that, for each fixed $\delta_j$ and $\ep_j$, we have $$\sum_{\delta_t > 2\ep_j-\delta_j+5} g(\delta_t,\ep_j)(\delta_t-\ep_j) = \sum_{\delta_t > \delta_j+5} g(\delta_t,\ep_j)(\delta_t-\ep_j)+O\left(\frac{\log^3(k)}{\sqrt{k}}\right)\sum_{\delta_t} g(\delta_t,\ep_j)|\delta_t-\ep_j|.$$ Indeed, for example, if $\delta_j < \ep_j$, then {\small$$\sum_{\delta_t > \delta_j+5} g(\delta_t,\ep_j)(\delta_t-\ep_j)-\sum_{\delta_t > 2\ep_j-\delta_j+5} g(\delta_t,\ep_j)(\delta_t-\ep_j)$$ $$= \sum_{\ep_j-(\ep_j-\delta_j)+5 < \delta_t \le \ep_j+(\ep_j-\delta_j)+5} g(\delta_t,\ep_j)(\delta_t-\ep_j)$$ $$\hspace{12mm} = \sum_{\ep_j < \delta_t \le \ep_j + (\ep_j-\delta_j)+5} [g(\delta_t,\ep_j)-g(2\ep_j-\delta_t,\ep_j)](\delta_t-\ep_j)$$ $$\hspace{7mm} = \sum_{\ep_j < \delta_t \le \ep_j+(\ep_j-\delta_j)+5} O\left(\frac{\log^3(k)}{\sqrt{k}}\right)g(\delta_t,\ep_j)(\delta_t-\ep_j).$$}

\noindent Therefore, \eqref{eqn:finalerexplicitcase1} is, up to negligible error, equal to \begin{equation}\label{eqn:finalestexplicitcase1} \sum_{\ep_j}\sum_{\delta_j > \ep_j} \left(\left[f(\ep_j,\delta_j)(\delta_j-\ep_j)+f(\ep_j,2\ep_j-\delta_j)(\ep_j-\delta_j)\right]\sum_{\delta_t > \delta_j+5} g(\delta_t,\ep_j)(\delta_t-\ep_j)\right).\end{equation}

\noindent Lemma 7 then allows us to write \eqref{eqn:finalestexplicitcase1} as $$\sum_{\ep_j} \sum_{\delta_j > \ep_j} \left(\left[(\delta_j-\ep_j)+(\ep_j-\delta_j)\right]f(\ep_j,\delta_j)\sum_{\delta_t > \delta_j+5} g(\delta_t,\ep_j)(\delta_t-\ep_j)\right)$$ up to a negligible error. But this is just $0$, and so we've established \eqref{eqn:main}.

\vsss

\section{Remaining Proofs of Lemmas}

In this section, we prove lemmas 5 and 2, restated here for the reader's convenience.

\setcounter{lemma}{4}
\begin{lemma}
For any fixed positive integers $a,j,t,m,f$ with $|m-2k|,|j-\frac{m}{2}|,|t-\frac{m}{2}|,|f-\frac{2k}{3}|,|a-\frac{f}{2}| \le \sqrt{k}\log(k)$ and $t > j$, the following holds: {\small $$\sum_b {t-j-1 \choose 2b-2a-1}{m-t+1 \choose 2f-2b-1} = \left(\frac{1}{2}+O\left(\frac{\log^2(k)}{t-j}\right)\right) \sum_b {t-j-1 \choose b-2a-1}{m-t+1 \choose 2f-b-1},$$} \noindent where the first sum is restricted to $b$ with $|b-a-\frac{t-j}{3}| \le \sqrt{t-j}\log(k)$, and the second sum is restricted to $b$ with $|b-2a-2\frac{t-j}{3}| \le 2\sqrt{t-j}\log(k)$.
\end{lemma}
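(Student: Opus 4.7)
The plan is to reduce the claim to a bound on the single coefficient $[y^C](1-y)^A(1+y)^B$, where $A=t-j-1$, $B=m-t+1$, $C=2f-2a-2$. By Lemma 2, the portions of both sums lying outside their stated windows contribute only an $e^{-\Omega(\log^2 k)}$ fraction of the totals, so I extend both sums over all of $\mathbb{Z}$ at negligible cost. Setting $\alpha=c-2a-1$ (so that $\alpha$ is odd precisely when $c$ is even), the first sum equals $\sum_{\alpha\text{ odd}}\binom{A}{\alpha}\binom{B}{C-\alpha}$ and the second equals $\sum_{\alpha}\binom{A}{\alpha}\binom{B}{C-\alpha}=\binom{A+B}{C}$ by Vandermonde. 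The generating-function identity $\sum_{\alpha}(-1)^{\alpha}\binom{A}{\alpha}\binom{B}{C-\alpha}=[y^C](1-y)^A(1+y)^B$ (obtained by specializing the expansion of $(1+xy)^A(1+y)^B$ at $x=-1$) then reduces the lemma to
\[
\bigl|[y^C](1-y)^A(1+y)^B\bigr|\;\lesssim\;\frac{\log^2(k)}{t-j}\binom{A+B}{C}.
\]

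I split into cases based on the size of $t-j$. If $t-j\le C_0\log^2 k$ for a sufficiently large constant $C_0$, then $\log^2(k)/(t-j)\ge 1/C_0$, and the triangle inequality on coefficients $|[y^C](1-y)^A(1+y)^B|\le\sum_{\alpha}\binom{A}{\alpha}\binom{B}{C-\alpha}=\binom{A+B}{C}$ immediately yields the claim up to a multiplicative constant. For $t-j>C_0\log^2 k$, I estimate the coefficient using the contour integral $(2\pi i)^{-1}\oint_{|y|=2}(1-y)^A(1+y)^By^{-C-1}\,dy$. On the circle $|y|=2$, direct computation gives $|(1-y)^A(1+y)^B|=(5-4\cos\theta)^{A/2}(5+4\cos\theta)^{B/2}$, which, because $A=O(\sqrt{k}\log k)\ll B\asymp k$ in our regime, is maximized at $\theta=0$ with value $3^B$; Taylor-expanding near $\theta=0$ shows $|f(2e^{i\theta})|\approx 3^B\exp(\theta^2(A-B/9))$, Gaussian-concentrated of width $\Theta(1/\sqrt{B})$. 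Integrating yields $|[y^C](1-y)^A(1+y)^B|\lesssim 3^B/(2^C\sqrt{B})$.

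To compare this with $\binom{A+B}{C}$, I use Stirling's formula (or, equivalently, Lemma 2 applied to telescoping products of binomial ratios) to estimate $\binom{A+B}{C}/\binom{B}{C}=\prod_{i=1}^{A}(B+i)/(B-C+i)\asymp 3^A\exp(O(\log^2 k))$ in our regime, together with $\binom{B}{C}\asymp(3^B/(2^C\sqrt{B}))\exp(O(\log^2 k))$ (using that $C/B=2/3+O(\log k/\sqrt{k})$). Dividing yields $|[y^C](1-y)^A(1+y)^B|/\binom{A+B}{C}\lesssim 3^{-A}\exp(O(\log^2 k))$. For $C_0$ chosen so that $C_0\log 3$ strictly exceeds the implicit constant in that exponent, this ratio for $A>C_0\log^2 k$ is superpolynomially smaller than $\log^2(k)/(t-j)$, which finishes the proof.

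The main obstacle is the entropic error $\exp(O(\log^2 k))$ that appears in comparing $3^{A+B}/2^C$ to $\binom{A+B}{C}$: since $\eta=C/(A+B)$ agrees with $2/3$ only up to $O(\log k/\sqrt{k})$, a careful Taylor expansion of the entropy is required to keep the implicit constant small enough that the $3^{-A}$ saddle-point gain beats this error throughout the range $A>C_0\log^2 k$. Once this bookkeeping is done via repeated application of Lemma 2 to the relevant products of binomials, the rest of the argument is routine.
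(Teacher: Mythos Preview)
Your argument is correct and takes a genuinely different route from the paper. The paper proves the lemma by an elementary local pairing: it matches each ``even'' term $\binom{t-j-1}{2b-2a}\binom{m-t+1}{2f-2b}$ with the convex combination $\tfrac{2}{3}$ of the term just before it plus $\tfrac{1}{3}$ of the term just after it, and checks by hand that these agree up to a factor $1+O(\log^2 k/(t-j))$; the specific weights $\tfrac{2}{3},\tfrac{1}{3}$ are chosen so that the linear-in-$\Delta$ term cancels, leaving only a quadratic correction. Your approach instead recognizes the alternating sum as the coefficient $[y^C](1-y)^A(1+y)^B$ and bounds it by a contour integral on $|y|=2$, where the integrand is dominated by the point $y=2$ with value $3^B$; this yields the much stronger estimate $3^{-A}\exp(O(\log^2 k))$ for the ratio when $A=t-j-1$ is large, and you cover small $t-j$ by the trivial bound. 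Each approach has its advantages: the paper's pairing is entirely elementary and gives the stated error uniformly in $t-j$, while your generating-function argument is more conceptual, gives exponential (rather than polynomial) decay for large $t-j$, and avoids the somewhat ad hoc discovery of the $\tfrac{2}{3}$--$\tfrac{1}{3}$ weights. The one point you flag as needing care --- that the entropic error in comparing $3^{A+B}/2^C$ to $\binom{A+B}{C}$ stays within $\exp(c\log^2 k)$ for a \emph{fixed} constant $c$ --- is indeed routine: writing $C=\tfrac{2}{3}(A+B)+\epsilon$ with $|\epsilon|\le c_1\sqrt{k}\log k$, the second-order Taylor term is $-\tfrac{9}{4(A+B)}\epsilon^2=O(\log^2 k)$ with an explicit constant, so choosing $C_0$ large enough that $C_0\log 3$ beats this constant makes $3^{-A}$ dominate.
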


\begin{proof}

The sum on the right contains all $b$ in the range $[2a+2\frac{t-j}{3}-2\sqrt{t-j}\log(k),2a+2\frac{t-j}{3}+2\sqrt{t-j}\log(k)]$, while the sum on the left contains only even $b$ in that range. Therefore, due to the factor of $\frac{1}{2}$, we wish to show \eqref{eqn:parity}: {\tiny \begin{equation}\label{eqn:parity} \sum_{b \text{ even}} {t-j-1 \choose 2b-2a-1}{m-t+1 \choose 2f-2b-1} - \sum_{b \text{ odd}} {t-j-1 \choose 2b-2a-1}{m-t+1 \choose 2f-2b-1} = O\left(\frac{\log^2(k)}{t-j}\right)\sum_b {t-j-1 \choose b-2a-1}{m-t+1 \choose 2f-2b-1},\end{equation}} where the range of $b$ is restricted to $|b-2a-2\frac{t-j}{3}| \le 2\sqrt{t-j}\log(k)$. 

\vs

The idea of the proof is to pair every even-$b$ term with $\frac{2}{3}$ times the (odd) term before it and $\frac{1}{3}$ times the (odd) term after it. Specifically, to establish \eqref{eqn:parity}, it suffices to show \eqref{eqn:oddcombo}: {\tiny \begin{equation}\label{eqn:oddcombo} \frac{2}{3}{t-j-1 \choose 2b-2a-1}{m-t+1 \choose 2f-2b+1}+\frac{1}{3}{t-j-1 \choose 2b-2a+1}{m-t+1 \choose 2f-2b-1} = \left(1+O\left(\frac{\log^2(k)}{t-j}\right)\right){t-j-1 \choose 2b-2a}{m-t+1 \choose 2f-2b}.\end{equation}} \hspace{-2.3mm} As mentioned on pages 14-15, the error $O(\frac{\log^2(k)}{\sqrt{t-j}})$ is trivial (it follows from pairing even-$b$ terms with odd-$b$ terms); our $\frac{2}{3}$-$\frac{1}{3}$ weighting gives the (necessary) improvement to $O(\frac{\log^2(k)}{t-j})$. Observe that {\scriptsize$$\frac{2}{3}{t-j-1 \choose 2b-2a-1}{m-t+1 \choose 2f-2b+1}+\frac{1}{3}{t-j-1 \choose 2b-2a+1}{m-t+1 \choose 2f-2b-1}$$} is, by using the equations ${c \choose d-1} = \frac{d}{c-d+1}{c \choose d}$ and ${c \choose d+1} = \frac{c-d}{d+1}{c \choose d}$, equal to {\scriptsize$${t-j-1 \choose 2b-2a}{m-t+1 \choose 2f-2b}$$ $$\indent \times \left[\frac{2}{3}\frac{2b-2a}{t-j-1-(2b-2a)+1}\frac{m-t+1-(2f-2b)}{2f-2b+2}+\frac{1}{3}\frac{t-j-1-(2b-2a)}{2b-2a+1}\frac{2f-2b}{m-t+1-(2f-2b)+1}\right].$$}

\noindent Since $$\frac{m-t+1-(2f-2b)}{2f-2b+2} = \frac{1}{2}+O\left(\frac{\log(k)}{\sqrt{k}}\right)$$ and $$\frac{2f-2b}{m-t+1-(2f-2b)+1} = 2+O\left(\frac{\log(k)}{\sqrt{k}}\right),$$ we may replace the expression above in brackets with, up to an acceptable error, \begin{equation}\label{eqn:combo} \frac{2}{3}\frac{b-a}{t-j-1-(2b-2a)}+\frac{1}{3}\frac{t-j-1-(2b-2a)}{b-a}.\end{equation} Writing $b = a+\frac{t-j-1}{3}+\Delta$ transforms \eqref{eqn:combo} into $$\frac{(\frac{t-j-1}{3})^2+2\Delta^2}{(\frac{t-j-1}{3})^2-\frac{t-j-1}{3}\Delta-2\Delta^2},$$ which is $1+O\left(\frac{\log^2(k)}{t-j}\right)$, the critical point being the lack of a $\frac{t-j-1}{3}\Delta$ term (which is why we chose the factors $\frac{2}{3}$ and $\frac{1}{3}$). This finishes the proof of \eqref{eqn:oddcombo} and thus \eqref{eqn:parity}. 
\end{proof}

\vs

\setcounter{lemma}{1}
\begin{lemma}
For any real $\eta$ bounded away from $0$ and $1$, any positive integers $A$ and $B$ such that $\eta A, \eta B \in \Z$, and any integers $\Delta$ and $\sigma$ such that $A+\Delta,\eta A+\sigma, B-\Delta$, and $\eta B-\sigma$ are non-negative, it holds that {\tiny $$\left[\frac{{A+\Delta \choose \eta A+\sigma}{B-\Delta \choose \eta B-\sigma}}{{A \choose \eta A}{B \choose \eta B}}\right]^{-1} =$$ $$(1+O(\frac{\sigma^3}{A^2}))(1+O(\frac{\Delta^3}{A^2}))(1+O(\frac{1}{A}))(1+O(\frac{\sigma(\Delta-\sigma)^2}{A^2}))(1+O(\frac{\Delta(\Delta-\sigma)^2}{A^2}))\exp\left(\frac{1}{2}\frac{(\Delta-\sigma)^2}{(1-\eta)A}+\frac{1}{2}\frac{\sigma^2}{\eta A}-\frac{1}{2}\frac{\Delta^2}{A}\right)$$ $$\times (1+O(\frac{\sigma^3}{B^2}))(1+O(\frac{\Delta^3}{B^2}))(1+O(\frac{1}{B}))(1+O(\frac{\sigma(\Delta-\sigma)^2}{B^2}))(1+O(\frac{\Delta(\Delta-\sigma)^2}{B^2}))\exp\left(\frac{1}{2}\frac{(\Delta-\sigma)^2}{(1-\eta)B}+\frac{1}{2}\frac{\sigma^2}{\eta B}-\frac{1}{2}\frac{\Delta^2}{B}\right).$$}
\end{lemma}

\begin{proof}
Using Stirling's approximation {\ssmall$$n!=\left(1+O\left(\frac{1}{n}\right)\right)\frac{n^n}{e^n}\sqrt{2\pi n},$$} we obtain {\ssmall $$\left[\frac{{A+\Delta \choose \eta A+\sigma}{B-\Delta \choose \eta B-\sigma}}{{A \choose \eta A}{B \choose \eta b}}\right]^{-1} = (1+O(\frac{1}{A}))(1+O(\frac{1}{B})) \times$$ $$\frac{(\eta A+\sigma)^{\eta A+\sigma}((1-\eta)A+\Delta-\sigma)^{(1-\eta)A+\Delta-\sigma}(\eta B-\sigma)^{\eta B-\sigma}((1-\eta)B-(\Delta-\sigma))^{(1-\eta)B-(\Delta-\sigma)}A^AB^B}{(\eta A)^{\eta A}((1-\eta)A)^{(1-\eta)A}(\eta B)^{\eta B}((1-\eta)B)^{(1-\eta)B}(A+\Delta)^{A+\Delta}(B-\Delta)^{B-\Delta}}$$ $$ = (1+O(\frac{1}{A}))(1+O(\frac{1}{B}))\left[\frac{\eta A+\sigma}{(1-\eta)A+(\Delta-\sigma)}\frac{(1-\eta)B-(\Delta-\sigma)}{\eta B-\sigma}\right]^\sigma$$ $$\hspace{23mm} \times \left[\frac{(1-\eta)A+(\Delta-\sigma)}{A+\Delta}\frac{B-\Delta}{(1-\eta)B-(\Delta-\sigma)}\right]^\Delta (1+\frac{\sigma}{\eta A})^{\eta A}(1+\frac{\Delta-\sigma}{(1-\eta)A})^{(1-\eta)A}$$ $$\indent \hspace{.2mm} \times (1-\frac{\sigma}{\eta B})^{\eta B}(1-\frac{\Delta}{A+\Delta})^A(1-\frac{\Delta-\sigma}{(1-\eta)B})^{(1-\eta)B}(1+\frac{\Delta}{B-\Delta})^B.$$}

\noindent Now, using that $\log(1+x) = x-\frac{x^2}{2}+O(x^3)$ for small $x$, {\ssmall$$(1+\frac{\sigma}{\eta A})^{\eta A}(1+\frac{\Delta-\sigma}{(1-\eta)A})^{(1-\eta)A}(1-\frac{\sigma}{\eta B})^{\eta B}(1-\frac{\Delta}{A+\Delta})^A(1-\frac{\Delta-\sigma}{(1-\eta)B})^{(1-\eta)B}(1+\frac{\Delta}{B-\Delta})^B$$ $$ = \exp\left(\eta A\left(\frac{\sigma}{\eta A}-\frac{1}{2}\frac{\sigma^2}{\eta^2 A^2}+O(\frac{\sigma^3}{A^3})\right)\right)\exp\left((1-\eta)A\left(\frac{\Delta-\sigma}{(1-\eta)A}-\frac{1}{2}\frac{(\Delta-\sigma)^2}{(1-\eta)^2A^2}+O(\frac{(\Delta-\sigma)^3}{A^3})\right)\right)$$ $$\indent \times \exp\left(-A\left(\frac{\Delta}{A+\Delta}+\frac{1}{2}\frac{\Delta^2}{(A+\Delta)^2}+O(\frac{\Delta^3}{(A+\Delta)^3})\right)\right) \exp\left(-\eta B\left(\frac{\sigma}{\eta B}+\frac{1}{2}\frac{\sigma^2}{\eta^2 B^2}+O(\frac{\sigma^3}{B^3})\right)\right) \times $$ $$\exp\left(-(1-\eta)B\left(\frac{\Delta-\sigma}{(1-\eta)B}+\frac{1}{2}\frac{(\Delta-\sigma)^2}{(1-\eta)^2B^2}+O(\frac{(\Delta-\sigma)^3}{B^3})))\exp(B(\frac{\Delta}{B-\Delta}+\frac{1}{2}\frac{\Delta^2}{(B-\Delta)^2}+O(\frac{\Delta^3}{(B-\Delta)^3})\right)\right)$$ $$\hspace{-17mm} = (1+O(\frac{\sigma^3}{A^2}))(1+O(\frac{\Delta^3}{A^2}))(1+O(\frac{(\Delta-\sigma)^3}{A^2}))\exp(-\frac{1}{2}\frac{\sigma^2}{\eta A}-\frac{1}{2}\frac{(\Delta-\sigma)^2}{(1-\eta)A}-\frac{1}{2}\frac{\Delta^2}{A}+\frac{\Delta^2}{A})$$ $$\hspace{-7mm} \times (1+O(\frac{\sigma^3}{B^2}))(1+O(\frac{\Delta^3}{B^2}))(1+O(\frac{(\Delta-\sigma)^3}{B^2}))\exp(-\frac{1}{2}\frac{\sigma^2}{\eta B}-\frac{1}{2}\frac{(\Delta-\sigma)^2}{(1-\eta)B}-\frac{1}{2}\frac{\Delta^2}{B}+\frac{\Delta^2}{B}).$$}

\noindent And using the simpler $\log(1+x) = x+O(x^2)$ for small $x$, {\ssmall$$\left[\frac{\eta A+\sigma}{(1-\eta)A+(\Delta-\sigma)}\frac{(1-\eta)B-(\Delta-\sigma)}{\eta B-\sigma}\right]^\sigma$$ $$\hspace{-24mm} = \left[1+\frac{(1-\eta)\sigma B-(\Delta-\sigma)\eta B+\sigma(1-\eta)A-\eta(\Delta-\sigma)A}{(1-\eta)\eta AB+(\Delta-\sigma)\eta B-\sigma(1-\eta)A-\sigma(\Delta-\sigma)}\right]^\sigma$$ $$\hspace{25mm} = \exp\left(\sigma\left(\frac{\sigma}{\eta A}-\frac{\Delta-\sigma}{(1-\eta)A}+\frac{\sigma}{\eta B}-\frac{\Delta-\sigma}{(1-\eta)B}+O(\frac{\sigma^2}{A^2})+O(\frac{(\Delta-\sigma)^2}{A^2})+O(\frac{\sigma^2}{B^2})+O(\frac{(\Delta-\sigma)^2}{B^2})\right)\right)$$} and {\ssmall $$\left[\frac{(1-\eta)A+(\Delta-\sigma)}{A+\Delta}\frac{B-\Delta}{(1-\eta)B-(\Delta-\sigma)}\right]^\Delta$$ $$\hspace{-10.5mm} = \left[1+\frac{(\Delta-\sigma)B-(1-\eta)\Delta B+(\Delta-\sigma)A-(1-\eta)\Delta A}{(1-\eta)AB+(1-\eta)\Delta B-(\Delta-\sigma)A-\Delta(\Delta-\sigma)}\right]^\Delta$$ $$\hspace{7.5mm} = \exp\left(\Delta\left(\frac{\Delta-\sigma}{(1-\eta)A}-\frac{\Delta}{A}+\frac{\Delta-\sigma}{(1-\eta)B}-\frac{\Delta}{B}+O(\frac{(\Delta-\sigma)^2}{A^2})+O(\frac{\Delta^2}{A^2})+O(\frac{(\Delta-\sigma)^2}{B^2})+O(\frac{\Delta^2}{B^2})\right)\right).$$}

\noindent Combining everything yields the lemma.
\end{proof}

\vspace{1mm}

\section{Large Hamming Distances} 

The lower bounds established for trace reconstruction thus far have come from pairs of strings with small Hamming distance. A natural question is what can be said about strings with very large Hamming distance. Of course, a pair of strings that differ in all but $O(1)$ indices can be distinguished very easily (in $O(1)$ traces). However, what if we insist on ``padding" two strings that always differ, at the beginning and end by some arbitrary strings? 

\vs

We say that a pair of strings $x,y \in \{0,1\}^n$ \textit{essentially always differ} if there are indices $k_1,k_2 \le n$ such that $x$ and $y$ agree at all indices at most $k_1$ and at least $k_2$, and disagree at all indices between $k_1$ and $k_2$. 

\vs

\setcounter{proposition}{7}
\begin{proposition}
Let $x,y \in \{0,1\}^n$ be a pair of strings that essentially always differ. Then $x$ and $y$ can be distinguished in $\exp(C\frac{\log^3 n}{\log\log n})$ samples. Here, $C > 0$ is an absolute constant. 
\end{proposition}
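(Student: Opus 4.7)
The plan is to apply the mean-based framework of Nazarov--Peres and De--O'Donnell--Servedio: to distinguish $\mu_x$ and $\mu_y$ in $T$ traces it suffices to exhibit some $z^{\star}$ on the unit circle with $|\phi_x(z^{\star}) - \phi_y(z^{\star})|^{-2} \lesssim T$, where
$$\phi_x(z) := \sum_{i\ge 1} z^i \Prob_{\mu_x}[\wt{U}_i = 1] = (1-q)\, z\, P_x((1-q)z+q), \qquad P_x(w) := \sum_{j=1}^n x_j w^{j-1}.$$
So I need $|\phi_x(z^{\star}) - \phi_y(z^{\star})| \ge \exp(-C\log^3 n/(2\log\log n))$ for some such $z^{\star}$. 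The essentially-always-differ hypothesis yields the clean factorization $P_x(w) - P_y(w) = w^{k_1} g(w)$, where $g(w) = \sum_{l=1}^L \ep_l w^{l-1}$ with $\ep_l := x_{k_1+l} - y_{k_1+l} \in \{-1,+1\}$ and $L := k_2 - k_1 - 1 \le n$. Setting $w = (1-q)z+q$,
$$|\phi_x(z) - \phi_y(z)| = (1-q)\,|w|^{k_1}\,|g(w)|.$$

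Writing $z = e^{i\theta}$, the factor $|w|^{k_1}$ is controlled via $|w|^2 = 1 - 4q(1-q)\sin^2(\theta/2)$: one has $|w|^{k_1} \ge \tfrac{1}{2}$ whenever $k_1 \theta^2 \lesssim 1$, so the only real difficulty is producing $w^{\star}$ on this small arc with a good lower bound on $|g(w^{\star})|$. For this, let $k$ be the exact order of vanishing of $g$ at $w = 1$, and factor $g(w) = (w-1)^k h(w)$. Since $g \in \Z[w]$ and $(w-1)^k$ is monic, Gauss's lemma gives $h \in \Z[w]$; since $k$ is exact, $h(1) \in \Z \setminus \{0\}$, hence $|h(1)| \ge 1$.

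I would then use two analytical ingredients: (a) a sharp bound on the multiplicity, namely $k \le C\log n/\log\log n$, and (b) a coefficient bound $\|h\|_\infty \le L^{O(k)}$. For (a), the relations $g^{(j)}(1) = \sum_l \ep_l\,(l-1)(l-2)\cdots(l-j)$ are integer-valued linear functionals of the $\pm 1$-sequence $(\ep_l)$, and forcing $k$ of them to vanish places $(\ep_l)$ in a codimension-$k$ affine subvariety; a careful entropy/counting argument, with a Thue--Morse-style recursive construction providing the matching extremal examples, gives the claimed bound on $k$. For (b), $h$ is obtained from $g$ by $k$ rounds of partial summation (antidifferencing), each of which enlarges the sup-norm of the coefficients by a factor at most $L$. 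Applying Bernstein's derivative inequality on the unit disk then gives $|h(w) - h(1)| \le \tfrac{1}{2}$ for $|w - 1| \le r := c/(L\|h\|_\infty)$, so $|h(w)| \ge \tfrac{1}{2}$ and $|g(w)| \ge r^k/2$ throughout that disk.

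Finally, I would pick $z^{\star}$ on the unit circle with $|w^{\star} - 1| = r$. Since $r$ is quasi-polynomially small in $n$, the corresponding $\theta^{\star} = \arg z^{\star}$ satisfies $k_1 (\theta^{\star})^2 \ll 1$, and hence $|w^{\star}|^{k_1} \ge \tfrac{1}{2}$. Combining,
$$|\phi_x(z^{\star}) - \phi_y(z^{\star})| \gtrsim r^k = \exp\!\big(-O\big(k \log(L\|h\|_\infty)\big)\big) = \exp\!\big(-O(k^2 \log n)\big),$$
which under the multiplicity bound in (a) gives exponent $O(\log^3 n/\log\log n)$, and hence $T \le \exp(O(\log^3 n/\log\log n))$ by the DOS framework. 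The main obstacle will be the sharp multiplicity bound on $k$: the trivial Thue--Morse observation gives only $k \le \log_2 L$, and this already yields the weaker $\exp(O(\log^3 n))$; capturing the additional $\log\log n$ factor in the denominator requires a more refined combinatorial/dimensional analysis of $\pm 1$-sequences with many vanishing moments at the origin, which I expect to be the technical heart of the proof.
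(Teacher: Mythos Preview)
Your route is genuinely different from the paper's --- you go through the mean-based (single-bit) framework of Nazarov--Peres and De--O'Donnell--Servedio, whereas the paper uses subsequence counts --- and most of your ingredients (the factorization $P_x-P_y=w^{k_1}g(w)$, Gauss's lemma for $h\in\Z[w]$, the coefficient bound (b), the choice of $z^\star$) are sound. But there is a real gap at (a).

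The bound you need in (a), $k \le C\log n/\log\log n$ for the multiplicity of $1$ as a root of a degree-$L$ polynomial with $\pm 1$ coefficients, is a well-known \emph{open problem}. The best proven upper bound is $k \le C\log^2 n/\log\log n$, and this is exactly Lemma~9 of the paper (quoted from Borwein). Your Thue--Morse remark is also inverted: the Thue--Morse polynomial $\prod_{j<m}(1-w^{2^j})$ is a \emph{construction} showing that $k$ can reach $\log_2 L$; it gives no upper bound, and no ``refined combinatorial/dimensional analysis'' is known that brings the upper bound down to $\log n/\log\log n$. If you instead insert the correct bound $k=O(\log^2 n/\log\log n)$ into your scheme, the exponent becomes $O(k^2\log n)=O(\log^5 n/(\log\log n)^2)$, strictly weaker than the target. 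The quadratic dependence on $k$ is forced by your step (b): $\|h\|_\infty\le L^{O(k)}$ makes $r\approx L^{-O(k)}$ and hence $|g(w^\star)|\gtrsim r^k=L^{-O(k^2)}$.

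The paper sidesteps this loss by using $k$-decks rather than bit means. By Dudik--Schulman, equality of the $k$-decks of $x$ and $y$ forces $(z-1)^k \mid p(z):=\sum_i(x_i-y_i)z^i$; since $p$ is a monomial times a $\pm 1$ polynomial, Lemma~9 rules this out once $k>C\log^2 n/\log\log n$. Hence some word $w$ of length at most $k$ has $f(w;x)\ne f(w;y)$, and estimating that one statistic from traces costs only $\exp(O(k\log n))=\exp(O(\log^3 n/\log\log n))$ samples. The contrast is that the $k$-deck route converts the multiplicity bound into sample complexity \emph{linearly} in $k$, whereas your complex-analytic route pays $k^2$.
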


We use the following lemma, found as E7 on page 64 of [13].

\setcounter{lemma}{8}
\begin{lemma}
Let $p(z) = a_nz^n+\dots+a_1z+a_0$ be a polynomial of degree $n$ with $a_i \in \{\pm 1\}$ for each $i$. Then, $p(z)$ has at most $\frac{C\log^2 n}{\log\log n}$ zeros at $1$, i.e., $(z-1)^m$ does not divide $p(z)$ for $m = \lfloor \frac{C\log^2 n}{\log\log n}\rfloor+1$. Here, $C > 0$ is an absolute constant.
\end{lemma}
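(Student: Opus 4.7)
The plan is to reformulate the divisibility statement as a vanishing-moment condition on the $\pm 1$ coefficient sequence and then apply a classical extremal-polynomial argument. First, I would substitute $z = 1 + w$ and write $p(1+w) = \sum_{k=0}^n c_k w^k$ with $c_k = \sum_{i=k}^n a_i \binom{i}{k}$. The hypothesis $(z-1)^m \mid p(z)$ translates to $c_0 = c_1 = \cdots = c_{m-1} = 0$, which (since $\{\binom{i}{k}\}_{k<m}$ spans the polynomials of degree less than $m$ in the variable $i$) is equivalent to
\[
\sum_{i=0}^n a_i P(i) = 0 \qquad \text{for every polynomial } P \text{ of degree less than } m.
\]

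Next, I would test this orthogonality against carefully chosen Lagrange-type polynomials. For any $i_0 \in \{0,\ldots,n\}$ and any $S \subseteq \{0,\ldots,n\}\setminus\{i_0\}$ with $|S| < m$, define $P_S(x) = \prod_{j \in S}(x-j)/\prod_{j \in S}(i_0-j)$, so that $\deg P_S = |S| < m$, $P_S(j)=0$ for $j \in S$, and $P_S(i_0)=1$. Plugging into the moment identity and isolating the $i=i_0$ term gives
\[
1 = |a_{i_0}| \;\leq\; \sum_{i \notin S \cup \{i_0\}} |P_S(i)| \;\leq\; (n+1)\max_{i \notin S \cup \{i_0\}} |P_S(i)|.
\]
Therefore the lemma reduces to exhibiting, whenever $m > C\log^2 n/\log\log n$, a choice of $i_0$ and $S$ with $|S|<m$ for which $\max_{i \notin S \cup \{i_0\}} |P_S(i)| < 1/(n+1)$, contradicting the displayed inequality.

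Third, I would optimize $S$. A natural first attempt is to take $i_0$ near the middle of $\{0,\ldots,n\}$ and $S$ to consist of the integers closest to $i_0$, so that the denominator $\prod_{j \in S}|i_0-j|$ grows like $(|S|/2)!^2$ while the numerator $\prod_{j\in S}|i-j|$ for the worst remaining integer $i$ is of order $n^{|S|}$. This Chebyshev-style analysis recovers the weaker classical bound $m = O(\sqrt{n\log n})$. To push down to $O(\log^2 n/\log\log n)$, one must exploit that the evaluation points $i$ lie in the integer lattice (not on the continuum): one leverages the fact that $\prod_{j \in S}(i-j)$ is divisible by $|S|!$ when $i \in \mathbb{Z}$, together with a more refined choice of $S$ (for instance, arithmetic-progression or prime-spaced nodes, as in the construction of Borwein--Erd\'elyi in reference [13]).

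The main obstacle is precisely this last combinatorial/number-theoretic step: naive polynomial extremal arguments on the real interval $[0,n]$ top out at $m = O(\sqrt{n})$, and the improvement to $O(\log^2 n/\log\log n)$ requires a delicate use of the integrality of the evaluation grid — essentially the heart of the argument in [13, E7, p.~64]. Since the paper cites this as a known lemma, the cleanest route is to invoke that reference directly once the reduction to the moment formulation above has been made explicit; reproducing the integer-lattice refinement in full detail would be straightforward but lengthy.
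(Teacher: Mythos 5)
Your reduction to vanishing moments ($\sum_i a_iP(i)=0$ for all $P$ of degree $<m$) and the Lagrange-node inequality $1\le\sum_{i\notin S\cup\{i_0\}}|P_S(i)|$ are correct, but note that the paper does not prove this lemma at all: it is quoted verbatim from [13] (E7, p.~64; the result is Boyd's theorem on Littlewood polynomials), so simply citing that reference is exactly what the paper does, and your moment reformulation is not needed for the citation --- it is essentially the deduction the paper performs later, inside the proof of Proposition 8, when passing from equal $k$-decks to $(z-1)^k\mid p(z)$.

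The genuine gap is in your final step. The claim that the improvement from $O(\sqrt{n\log n})$ to $O(\log^2 n/\log\log n)$ comes from ``integrality of the evaluation grid'' (divisibility of $\prod_{j\in S}(i-j)$ by $|S|!$, prime-spaced nodes) cannot work, because your framework only uses $|a_i|\le 1$ together with $|a_{i_0}|=1$ for a single index. A pigeonhole count of the moment vectors $\bigl(\sum_i x_i i^j\bigr)_{j<m}$ over $x\in\{0,1\}^{n+1}$ shows there exist nonzero polynomials with coefficients in $\{-1,0,1\}$ and degree at most $n$ divisible by $(z-1)^m$ with $m\ge c\sqrt{n/\log n}$; such polynomials satisfy every constraint your inequality uses, so no choice of $i_0$ and $S$ can force $\sum_{i\notin S\cup\{i_0\}}|P_S(i)|<1$ below that scale. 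In other words, any proof of the stated bound must use the full $\pm1$ hypothesis (all coefficients odd), not just boundedness, and the actual argument behind E7 is arithmetic rather than interpolation-theoretic: for instance, $p(1)=\sum_i a_i$ is odd when $n$ is even, and in general one reduces mod $2$ (where $p(z)\equiv 1+z+\cdots+z^n$) and/or bounds the multiplicity by taking algebraic norms of $p(\zeta)$ at roots of unity of prime(-power) order, where the norm is a nonzero integer divisible by a large power of the prime yet bounded by $(n+1)^{\phi}$, which is where $\log^2 n/\log\log n$ emerges. So the remaining step is not ``straightforward but lengthy''; it requires a different idea, and the honest versions of your write-up are either to cite [13] outright (as the paper does) or to reproduce Boyd's arithmetic argument.
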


\vs

With this lemma, we deduce Proposition 8 as follows. We first claim that there is some $0-1$ string $w$ of length at most $k := \lfloor \frac{C\log^2 n}{\log\log n} \rfloor+1$ such that $f(w;x) \not = f(w;y)$ (see Lemma 1 for notation). Indeed, if $f(w;x) = f(w;y)$ for all $w$ of length at most $k$, that is, if the so-called ``$k$-decks" of $x$ and $y$ are the same, then by Section 5 of [11], it must be that $\sum_{i=1}^n x_i i^m = \sum_{i=1}^n y_i i^m$ for all $0 \le m \le k-1$. If we let $p(z) = \sum_{i=1}^n [x_i-y_i]z^i$, then it's easy to see that the equalities imply $p(1),p'(1),\dots,p^{(k-1)}(1) = 0$, which imply $(z-1)^k \mid p(z)$. Now, since $x$ and $y$ essentially always differ, $p(z)$ takes the form $p(z) = \epsilon_{k_1}z^{k_1}+\epsilon_{k_1+1}z^{k_1+1}\dots+\epsilon_{k_2-1}z^{k_2-1}+\epsilon_{k_2}z^{k_2}$ for some $\epsilon_{k_1},\dots,\epsilon_{k_2} \in \{\pm 1\}$. Therefore, by factoring out $z^{k_1}$ and noting $k_2-k_1 \le n$, Lemma 9 implies $k \le \frac{C\log^2 n}{\log\log n}$, a contradiction. The claim is established.

With this claim, we can distinguish between $x$ and $y$ by simply looking at $f(w;\wt{U})$ for traces $\wt{U}$; indeed, $\mathbb{E}_x[f(w;\wt{U})] = f(w;x)(1-q)^{-|w|}$. Since $|w| \le C\frac{\log^2 n}{\log\log n}$, it holds that $\exp(C'\frac{C\log^2 n}{\log\log n}\log n)$ traces suffice to distinguish between $x$ and $y$. For details, see the proof of Theorem 14 of [10].

\section{Acknowledgments}

I would like to thank Omer Tamuz for introducing me to the wonderful trace reconstruction problem, and for helpful discussions. I would also like to thank Russell Lyons for much helpful feedback on the paper, and for a bijective proof of Lemma 1. Finally, I would like to greatly thank an anonymous referee for several helpful comments, substantially improving the paper's readability and understandability. 

\vs

\end{document}